\newtheorem{theorem}{Theorem}
\numberwithin{theorem}{section}
\newtheorem{corollary}[theorem]{Corollary}
\newtheorem{lemma}[theorem]{Lemma}
\newtheorem{proposition}[theorem]{Proposition}
\theoremstyle{definition}
\newtheorem{definition}[theorem]{Definition}
\newtheorem{remark}[theorem]{Remark}
\newtheorem{example}[theorem]{Example}
\newcommand{\seq}{\mathsf{Seq}}
\newcommand{\wpo}{\mathsf{WPO}}
\newcommand{\po}{\mathsf{PO}}
\newcommand{\lo}{\mathsf{LO}}
\newcommand{\wo}{\mathsf{WO}}
\newcommand{\rng}{\operatorname{rng}}
\newcommand{\supp}{\operatorname{supp}}
\newcommand{\To}{\Rightarrow}
\newcommand{\tw}{{\mathcal{T}W}}
\newcommand{\tr}{\text{Tr}}
\newcommand{\en}{\operatorname{en}}
\title{Normal functions and maximal order types}
\author{Anton Freund and Davide Manca}
\address{Department of Mathematics, Technical University of Darmstadt, Schloss\-garten\-str.~7, 64289~Darmstadt, Germany}
\email{\{freund,manca\}@mathematik.tu-darmstadt.de}
\thanks{Funded by the Deutsche Forschungsgemeinschaft (DFG, German Research Foundation) -- Project number 460597863.}
\begin{document}

\begin{abstract}
Transformations of well partial orders induce functions on the ordinals, via the notion of maximal order type. In most examples from the literature, these functions are not normal, in marked contrast with the central role that normal functions play in ordinal analysis and related work from computability theory. The present paper aims to explain this phenomenon. In order to do so, we investigate a rich class of order transformations that are known as $\mathsf{WPO}$-dilators. According to a first main result of this paper, $\mathsf{WPO}$-dilators induce normal functions when they satisfy a rather restrictive condition, which we call strong normality. Moreover, the reverse implication holds as well, for reasonably well behaved $\mathsf{WPO}$-dilators. Strong normality also allows us to explain another phenomenon: by previous work of Freund, Rathjen and Weiermann, a uniform Kruskal theorem for $\mathsf{WPO}$-dilators is as strong as \mbox{$\Pi^1_1$-}comprehension, while the corresponding result for normal dilators on linear orders is equivalent to the much weaker principle of $\Pi^1_1$-induction. As our second main result, we show~that $\Pi^1_1$-induction is equivalent to the uniform Kruskal theorem for $\mathsf{WPO}$-dilators that are strongly normal.
\end{abstract}

\keywords{Normal function, Maximal order type, Dilator, Uniform Kruskal theorem, Reverse mathematics, Well partial order}
\subjclass[2020]{03B30, 03F15, 03F35, 06A06}

\maketitle

\section{Introduction}

This paper investigates connections between two notions that are central to proof theory and its applications in reverse mathematics: normal functions on the ordinals and the maximal order types of well partial orders.

Recall that a function~$f$ from ordinals to ordinals is normal if it is strictly increasing and continuous at limits, which means that we have $f(\lambda)=\sup\{f(\alpha)\,|\,\alpha<\lambda\}$ for any limit ordinal $\lambda$. Equivalently, the range of~$f$ is a closed and unbounded (club) class of ordinals. When~$f$ is normal, the class of its fixed points is club itself. This fact underlies the definition of the Veblen hierarchy of normal functions: starting with a base function that is typically given by $\varphi_0(\gamma)=\omega^\gamma$, one stipulates that $\varphi_\alpha$ is the increasing enumeration of the club class $\{\gamma\,|\,\varphi_\beta(\gamma)=\gamma\text{ for all }\beta<\alpha\}$. By $\gamma\mapsto\Gamma_\gamma$ one denotes the enumeration of $\{\beta\,|\,\varphi_\beta0=\beta\}$, which is also club.

The Veblen hierarchy is a key ingredient for the ordinal analysis of important mathematical axiom systems. Indeed, it is a famous result of S.~Feferman~\cite{feferman64}~and K.~Sch\"utte~\cite{schuette64} that $\Gamma_0$ is the proof theoretic ordinal of predicative theories, such as the system~$\mathsf{ATR}_0$ from reverse mathematics (see~\cite{simpson09} for background on the latter). Another type of result in reverse mathematics was pioneered by J.-Y.~Girard~\cite{girard87}: he showed that, over $\mathsf{RCA}_0$, arithmetical comprehension is equivalent to the statement that $\beta\mapsto\omega^\beta$ preserves well foundedness (where $\omega^\beta$ is considered as a linear order that is computable relative to~$\beta$; see also the proof by J.~Hirst~\cite{hirst94}). Other~important set existence principles have been characterized in the same way: the transformations \mbox{$\beta\mapsto\varphi_1(\beta)=\varepsilon_\beta$}, $\alpha\mapsto\varphi_\alpha(0)$ and $\gamma\mapsto\Gamma_\gamma$ correspond, respectively, to arithmetical recursion along~$\mathbb N$ and along arbitrary well orders as well as to the principle that any set lies in a countable $\omega$-model of~$\mathsf{ATR}_0$ (see~\cite{rathjen-afshari,marcone-montalban,rathjen-atr,rathjen-weiermann-atr}). Some of these results have been proved both by ordinal analysis and by computability theory, so that they provide a connection between these approaches.

To consider ordinal numbers in a framework such as reverse mathematics, one will often represent them as well orders on suitable systems of terms. For example, the aforementioned order $\omega^\beta$ can be given as the system of Cantor normal forms $\omega^{\beta_0}+\ldots+\omega^{\beta_{n-1}}$, seen as terms with constant symbols $\beta_i<\beta$. These so-called ordinal notation systems are well-motivated, but for larger ordinals the syntactic details are somewhat intricate (cf.~the representation of~$\varphi_\alpha(0)$ in \cite[Section~2]{rathjen-weiermann-atr}). One can significantly reduce the syntactic complexity by considering partial rather than linear orders. We now introduce some terminology that makes this precise. Let us say that a function $f:X\to Y$ between partial orders is a quasi embedding if it reflects the order, i.\,e., if $f(x)\leq_Y f(x')$ entails $x\leq_X x'$. By a linearization of~$X$ we mean a surjective quasi embedding $\alpha\to X$ for a linear order~$\alpha$. A~partial order~$X$ is called a well partial order if any infinite sequence $x_0,x_1,\ldots\subseteq X$ involves an inequality $x_i\leq_X x_j$ for some~$i<j$. It is straightforward to infer that $\alpha$ is a well order for any linearization~$\alpha\to X$. In fact, this property yields one of several equivalent characterizations of well partial orders. Since the equivalence is not provable in~$\mathsf{RCA}_0$ (see~\cite{cholak-RM-wpo}), we insist on the given definition when the base theory is relevant. In a fundamental paper of D.~de Jongh and R.~Parikh~\cite{deJongh-Parikh}, the maximal order type of a well partial order~$X$ has been defined as the ordinal
\begin{equation*}
o(X):=\sup\{\alpha\,|\,\text{there is a linearization }\alpha\to X\},
\end{equation*}
where each well order~$\alpha$ is identified with the isomorphic ordinal number. As shown by de Jongh and Parikh, the given supremum is in fact a maximum, i.\,e., a linearization $o(X)\to X$ does always exist. By an unpublished result of de Jongh (see the introduction of~\cite{schmidt75}), the proof theoretic ordinal $\varphi_1(0)=\varepsilon_0$ of Peano arithmetic coincides with the maximal order type of the collection of binary trees, where an inequality between trees is given by an embedding that respects infima. Classical work of D.~Schmidt~\cite{schmidt-habil-new} (originally from her 1979 \mbox{habilitation}) gives analogous characterizations for a range of larger ordinals. An example for recent work is provided by the thesis of J.~van der Meeren~\cite{meeren-thesis} and his papers with M.~Rathjen and A.~Weier\-mann~\cite{MRW-Veblen,MRW-Bachmann}. In all cases, a complex ordinal notation system is characterized in terms of a partial order that is simpler and `more mathematical'. In addition to their intrinsic interest, these characterizations have important applications in reverse mathematics. Famously, H.~Friedman has combined such a characterization and a result of ordinal analysis to show that predicative axiom systems cannot prove Kruskal's theorem, even in a finitized version (see~\cite{simpson85} and the precise bounds determined by Rathjen and Weiermann~\cite{rathjen-weiermann-kruskal}). We note that this provides a concrete mathematical example for the incompleteness phenomenon from G\"odel's theorems. As a second more recent application, we mention the analysis of Fra\"iss\'e's conjecture for linear orders of finite Hausdorff rank, which is due to A.~Marcone and A.~Montalb\'an~\cite{marcone-montalban-hausdorff}.

To motivate our contribution in the present paper, we take a somewhat closer look at a classical example. Let $\seq(X)$ be the partial order of finite sequences in a given partial order~$X$, where we have $\langle x_0,\ldots,x_{m-1}\rangle\leq_{\seq(X)}\langle y_0,\ldots,y_{n-1}\rangle$ when there is a strictly increasing function $f:\{0,\ldots,m-1\}\to\{0,\ldots,n-1\}$ such that $x_i\leq_X y_{f(i)}$ holds for all~$i<m$. Higman's lemma is the result that $\seq(X)$ is a well partial order whenever the same holds for~$X$. Over $\mathsf{RCA}_0$, this is equivalent to the statement that $\beta\mapsto\omega^\beta$ preserves well foundedness, as shown by S.~Simpson~\cite{simpson-higman}. Due to the aforementioned result of Girard, it follows that Higman's lemma is equivalent to arithmetical comprehension. The precise relation between the partial and the linear case, however, is somewhat intricate: according to~\cite{hasegawa94} we~have
\begin{equation*}
o\big(\seq(X)\big)=\begin{cases}
\omega^{\omega^{o(X)-1}} & \text{if $o(X)$ is finite (where $\omega^{-1}:=0$)},\\
\omega^{\omega^{o(X)+1}} & \text{if $o(X)=\varphi_1(\beta)+n$ for some $\beta$ and $n<\omega$},\\
\omega^{\omega^{o(X)}} & \text{otherwise}.
\end{cases}
\end{equation*}
Let us note that $o(\seq(X))$ does only depend on~$o(X)$. As any ordinal~$\alpha$ is equal to its maximal order type~$o(\alpha)$, we may thus focus on the function $\alpha\mapsto o(\seq(\alpha))$. The analogous point can be made for many examples from the literature (see in particular \cite{meeren-thesis,schmidt-habil-new}). Let us now recall that $\varphi_1$ enumerates the fixed points of the function $\gamma\mapsto\varphi_0(\gamma)=\omega^\gamma$, which are called $\varepsilon$-numbers. It~follows that $\gamma<\varphi_1(\beta)$ entails $\omega^\gamma<\varphi_1(\beta)$, so that we get
\begin{equation*}
\sup\left\{\left.o\big(\seq(\gamma)\big)\,\right|\,\gamma<\varphi_1(\beta)\right\}=\varphi_1(\beta)<\omega^{\omega^{\varphi_1(\beta)+1}}=o\left(\seq\big(\varphi_1(\beta)\big)\right).
\end{equation*}
This means that the function $\alpha\mapsto o(\seq(\alpha))$ is not normal. In his impressive work on ordinal notations, R.~Hasegawa describes this as a ``strange fact" that provides the starting point for his investigation (see~\cite[Section~3]{hasegawa94}). The same \mbox{phenomenon} occurs when we replace $X\mapsto\seq(X)$ by other natural transformations of partial orders, such as $X\mapsto X+X$ or $X\mapsto X\times X$ (see~\cite[Theorems~3.4 and~3.5]{deJongh-Parikh}), a multiset ordering studied by Aschenbrenner and Pong as well as Weiermann (see~\cite{aschenbrenner-pong} and \cite[Theorem~2]{weiermann09}), or different variants of labelled trees (see~\cite{meeren-thesis,rathjen-weiermann-kruskal,schmidt-habil-new}). A~rare case where we do get a normal function is a different order on multi\-sets, as explained in Example~\ref{ex:multiset} below.

To summarize, we have seen that normal functions are central in ordinal analysis but rare in the study of maximal order types, even though the two approaches have important connections. The first aim of the present paper is to give a systematic explanation of this ``strange fact" (taking up the quote by Hasegawa from above). For this purpose, we shall study a rich class of functors on well partial orders, which are called $\wpo$-dilators. These have been introduced in~\cite{frw-kruskal}, in analogy~with Girard's dilators on linear orders (see~\cite{girard-pi2}). Details are recalled in Section~\ref{sect:strong-normality}. Also in the latter, we identify a simple `syntactic' criterion that singles out a class of \mbox{$\wpo$-}dilators that we call strongly normal (as a related but weaker notion of normality has been considered in~\cite{frw-kruskal}). We then prove a first main result of the present paper: for any $\wpo$-dilator~$W$, the assumption that~$W$ is strongly normal is sufficient, and under certain conditions necessary, to ensure that $\alpha\mapsto o(W(\alpha))$ is a normal function on the ordinals. We will see that strong normality is a rather restrictive condition, which one expects to fail in most natural cases (even though the aforementioned multiset construction does provide a relevant example). As promised, this explains why normal functions are so rare in connection with maximal order types.

In Section~\ref{sect:uniform-Kruskal} we show that the notion of strong normality sheds light on another situation where the cases of partial and linear orders have not matched up so~far. Each $\wpo$-dilator~$W$ that is normal in the sense of~\cite{frw-kruskal} (i.\,e., not necessarily strongly normal) gives rise to a certain partial order~$\mathcal T W$. The statement that $\mathcal T W$ is a well partial order for any normal $\wpo$-dilator~$W$ is called the uniform Kruskal theorem, as several variants of the original theorem arise for specific~$W$. By a result of Freund, Rathjen and Weiermann~\cite{frw-kruskal}, the uniform Kruskal theorem is equivalent to the strong set existence principle of $\Pi^1_1$-comprehension, over $\mathsf{RCA}_0$ extended by the chain antichain principle. Somewhat analogous to the construction of~$\mathcal T W$,~each dilator~$D$ on linear orders is associated with a certain linear order~$\vartheta D$. The statement that $\vartheta D$ is well founded for any dilator~$D$ on linear orders is also equivalent to  $\Pi^1_1$-comprehension, as shown in~\cite{freund-equivalence,freund-computable}. However, when we restrict to~$D$ that are normal, we obtain an equivalence with the much weaker principle of \mbox{$\Pi^1_1$-induction} along~$\mathbb N$, now over~$\mathsf{ACA}_0$ (see~\cite{freund-single-fixed-point} and compare~\cite{freund-rathjen_derivatives}). As our second main result, we show that $\Pi^1_1$-induction along~$\mathbb N$ is equivalent to the uniform Kruskal theorem for strongly normal $\wpo$-dilators. This completes the picture and confirms strong normality as `the right' characterization of normal functions in the partial case.

\section{Strong normality}\label{sect:strong-normality}

In this section, we recall the definition of $\wpo$-dilator and introduce the notion of strong normality. We then discuss how the strong normality of a $\wpo$-dilator~$W$ relates to the normality of the function $\alpha\mapsto o(W(\alpha))$.

Let $\po$ be the category with the partial orders as objects and the quasi embeddings (order reflecting functions) as morphisms. A quasi embedding $f:X\to Y$ is called an embedding if it does also preserve the order. We say that a functor $W:\po\to\po$ preserves embeddings if $W(f):W(X)\to W(Y)$ is an embedding whenever the same holds for~$f:X\to Y$. Let us write $[\cdot]^{<\omega}$ for the finite subset functor on the category of sets, with
\begin{align*}
[X]^{<\omega}&:=\text{`the set of finite subsets of~$X$'},\\
[f]^{<\omega}(a)&:=\{f(x)\,|\,x\in a\}\quad\text{for $f:X\to Y$ and $a\in[X]^{<\omega}$}.
\end{align*}
The forgetful functor to the underlying set will be left implicit, e.\,g., when we consider the natural transformation $W\Rightarrow[\cdot]^{<\omega}$ in the following definition. Conversely, a subset of an ordered set will often be considered as a suborder. We shall write $\rng(f)=\{f(x)\,|\,x\in X\}$ for the range of a function~$f:X\to Y$. The following notion has been introduced in~\cite{frw-kruskal}. It is closely related to Girard's definition of dilators on linear orders~\cite{girard-pi2} (consider \cite[Remark~2.2.2]{freund-thesis} to see the precise connection).

\begin{definition}\label{def:podilator}
A $\po$-dilator consists of a functor $W:\po\to\po$ that preserves embeddings and a natural transformation $\supp:W\To[\cdot]^{<\omega}$ such that we have
\begin{equation*}
\supp_Y(\sigma)\subseteq\rng(f)\quad\To\quad\sigma\in\rng(W(f)),
\end{equation*}
for any embedding~$f:X\to Y$ and all~$\sigma\in W(Y)$. If, in addition, $W(X)$ is a well partial order whenever the same holds for~$X$, then $W$ is called a $\wpo$-dilator.
\end{definition}

The implication in the definition will be called the support condition. One should note that the converse implication follows from naturality. To explain our choice of morphisms, we recall that a linearization is a special kind of quasi embedding, as observed in the introduction. The condition that~$W$ preserves embeddings ensures that it is determined by its restriction to (morphisms between) finite \mbox{orders}. For example, to evaluate $\sigma\leq\tau$ in~$W(X)$, we consider the inclusion $\iota:a\hookrightarrow X$ of the finite set $a:=\supp_X(\sigma)\cup\supp_X(\tau)$. The support condition yields $\sigma=W(\iota)(\sigma_0)$ and $\tau=W(\iota)(\tau_0)$ for suitable $\sigma_0,\tau_0\in W(a)$. To determine the inequality in~$W(X)$, it is now enough to evaluate $\sigma_0\leq\tau_0$ in~$W(a)$, since $W(\iota)$ is an embedding. As shown in~\cite{frw-kruskal}, one can exploit this observation to represent $\po$-dilators in the framework of reverse mathematics, parallel to the case of Girard's dilators on linear orders. The following notion has also been introduced in~\cite{frw-kruskal}, where it was required for the construction of the partial order~$\mathcal T W$ that we have mentioned in the introduction.

\begin{definition}\label{def:normal}
A $\po$-dilator~$W$ is called normal if we have
\begin{equation*}
\sigma\leq_{W(X)}\tau\quad\To\quad\text{for any $x\in\supp_X(\sigma)$ there is an $x'\in\supp_X(\tau)$ with $x\leq_X x'$},
\end{equation*}
for any partial order~$X$ and all $\sigma,\tau\in W(X)$.
\end{definition}

For dilators on linear orders, the analogous condition characterizes continuity at limit ordinals (cf.~the work of P.~Aczel~\cite{aczel-phd,aczel-normal-functors} and the flowers of Girard~\cite{girard-pi2} as well as the reversal provided by~\cite[Theorem~1]{afrw-sharps}). In~\cite{frw-kruskal} it seemed reasonable to keep the term `normal' in the context of partial orders, even though the connection with normal functions on the ordinals is lost, as the following example shows.

\begin{example}\label{ex:seq}
The transformation $X\mapsto\seq(X)$ that we have considered in the introduction can be turned into a $\wpo$-dilator in the following way: If $f:X\rightarrow Y$ is a quasi embedding, we define $\seq(f):\seq(X)\rightarrow \seq(Y)$ as the quasi embedding given by the clause
\begin{equation*}
    \seq(f)(\langle x_0,...,x_{m-1}\rangle)= \langle f(x_0),...,f(x_{m-1})\rangle.
\end{equation*}
Moreover, we define the family of functions $\supp_X:\seq(X)\rightarrow [X]^{<\omega}$ by setting 
\begin{equation*}
    \supp_X(\langle x_0,...,x_{m-1}\rangle)=\{x_0,...,x_{m-1}\}.
\end{equation*}
It is straightforward to verify that the given functions form a $\wpo$-dilator. We recall that an inequality $\sigma=\langle x_0,...,x_{m-1}\rangle\le_{\seq(X)}\langle y_0,...,y_{n-1}\rangle=\tau$ is witnessed by a strictly increasing function $h:\{0,...,m-1\}\rightarrow\{0,...,n-1\}$ with $x_i\leq_X y_{h(i)}$ for all~$i<m$. The latter entails that any $x\in\supp_X(\sigma)$ is majorized by some element $y\in\supp_X(\tau)$. Therefore, the $\wpo$-dilator $\seq$ is normal. At the same time, the map $\alpha\mapsto o(\seq(\alpha))$ is not continuous, as we have seen in the introduction.
\end{example}

It will be convenient to consider a somewhat stronger notion of normality, which is analogous to a condition that Aczel~\cite{aczel-phd,aczel-normal-functors} has introduced in the linear case. We shall write $\operatorname{Id}_{\mathcal C}$ for the identity functor on a category~$\mathcal C$.

\begin{definition}\label{def:Aczel-normal}
A $\po$-dilator~$W$ is called Aczel-normal if it comes with a natural transformation $\mu:\operatorname{Id}_\po\Rightarrow W$ such that we have
\begin{equation*}
x\leq_X y\text{ for some }y\in\supp_X(\sigma)\quad\Leftrightarrow\quad \mu_X(x)\leq_{W(X)}\sigma,
\end{equation*}
for any partial order~$X$ and all $x\in X$ and $\sigma\in W(X)$.
\end{definition}

The $\po$-dilator $\seq$ from Example~\ref{ex:seq} is Aczel-normal with $\mu_X(x)=\langle x\rangle$. More generally, when $W(X)$ consists of $X$-labelled structures, we can typically take $\mu_X(x)$ to be a single point with label~$x$. Let us confirm the following.

\begin{lemma}\label{lem:Aczel-normal-normal}
Any Aczel-normal $\po$-dilator is normal in the sense of Definition~\ref{def:normal}.
\end{lemma}
\begin{proof}
Let $W$ be an Aczel-normal $\po$-dilator, and consider a partial order $X$ and elements $\sigma,\tau\in W(X)$ such that $\sigma\le_{W(X)}\tau$. For any $x\in\supp_X(\sigma)$, we have that $\mu_X(x)\le_{W(X)}\sigma\le_{W(X)}\tau$. Hence, there is a $y\in\supp_X(\tau)$ such that $x\le_X y$.
\end{proof}

We continue with two further fundamental properties.

\begin{lemma}\label{lem:Aczel-normal-basic}
The following holds whenever $W$ is an Aczel-normal $\po$-dilator:
\begin{enumerate}[label=(\alph*)]
\item We have $\supp_X(\mu_X(x))=\{x\}$ for any partial order~$X$ and all $x\in X$.
\item Each function $\mu_X:X\to W(X)$ is an order embedding.
\end{enumerate}
\end{lemma}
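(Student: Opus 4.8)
The plan is to prove both parts directly from the defining biconditional of Aczel-normality, using the natural transformation $\mu$ together with the support condition from Definition~\ref{def:podilator}. For part~(a), I would fix a partial order~$X$ and an element $x\in X$, and analyse $\supp_X(\mu_X(x))$. The easy inclusion $\supp_X(\mu_X(x))\subseteq\{x\}$ should follow from naturality of $\mu$ and $\supp$: consider the one-point suborder $\{x\}\hookrightarrow X$ and the inclusion $\iota$, so that $\mu_X(x)=W(\iota)(\mu_{\{x\}}(x))$, and since $\supp$ is natural, $\supp_X(\mu_X(x))=[\iota]^{<\omega}(\supp_{\{x\}}(\mu_{\{x\}}(x)))\subseteq\{x\}$. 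For the reverse inclusion, I would apply the Aczel-normal biconditional with $\sigma:=\mu_X(x)$: since $x\leq_X x$ trivially holds, the only way the left-hand side can be realized through some $y\in\supp_X(\mu_X(x))$ is if that support is nonempty, and indeed taking $y:=x$ forces $x\in\supp_X(\mu_X(x))$. More carefully, the biconditional gives $\mu_X(x)\leq_{W(X)}\mu_X(x)$ iff there is $y\in\supp_X(\mu_X(x))$ with $x\leq_X y$; the left side holds by reflexivity, so such a $y$ exists, and combined with $\supp_X(\mu_X(x))\subseteq\{x\}$ we conclude $y=x$ and hence $\{x\}\subseteq\supp_X(\mu_X(x))$.

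For part~(b), I would show that $\mu_X$ both preserves and reflects the order, i.\,e., that $x\leq_X y\Leftrightarrow\mu_X(x)\leq_{W(X)}\mu_X(y)$. Using part~(a), we know $\supp_X(\mu_X(y))=\{y\}$, so the Aczel-normal biconditional applied with $\sigma:=\mu_X(y)$ reads: $x\leq_X y'$ for some $y'\in\{y\}$ if and only if $\mu_X(x)\leq_{W(X)}\mu_X(y)$. The left-hand side is simply $x\leq_X y$, which immediately yields the desired equivalence. This establishes that $\mu_X$ is an order embedding in the sense used in the paper (a quasi embedding that also preserves the order).

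The main subtlety, rather than a genuine obstacle, is getting part~(a) logically airtight before invoking it in part~(b), since part~(b)'s clean application of the biconditional relies on having already pinned down $\supp_X(\mu_X(y))=\{y\}$. I would therefore carry out the two parts in the stated order. A secondary point worth checking is that $\mu_X$ is well-defined as a map $X\to W(X)$ compatible with the implicit forgetful functor, but this is guaranteed by $\mu$ being a natural transformation $\operatorname{Id}_\po\Rightarrow W$, so no extra work is needed there.
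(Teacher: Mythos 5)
Your proof is correct, and your part~(b) coincides with the paper's own argument, but your part~(a) takes a genuinely different route for the one step that requires work, namely showing that $\supp_X(\mu_X(x))$ is nonempty. The paper argues by contradiction: if $\supp_1(\mu_1(0))$ were empty, the support condition for the empty embedding $l:\emptyset\to 1$ would yield $\mu_1(0)=W(l)(\mu_0)$ for some $\mu_0\in W(\emptyset)$, and then naturality of~$\mu$ along the two maps $f_0,f_1:1\to 2$ (which satisfy $f_0\circ l=f_1\circ l$) would force $\mu_2(0)=\mu_2(1)$, contradicting the injectivity of the quasi embedding~$\mu_2$; the general case is then transferred by naturality along $1\ni 0\mapsto x\in X$, exactly as in your argument for the inclusion $\supp_X(\mu_X(x))\subseteq\{x\}$. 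You instead obtain nonemptiness directly from the defining equivalence of Aczel-normality: instantiating $\sigma:=\mu_X(x)$, reflexivity of $\leq_{W(X)}$ gives $\mu_X(x)\leq_{W(X)}\mu_X(x)$ for free, so the equivalence produces some $y\in\supp_X(\mu_X(x))$ with $x\leq_X y$, and $y=x$ follows from the inclusion you already have. Your version is shorter and avoids the detour through $W(\emptyset)$ and the order~$2$; the trade-off is that it uses the full biconditional of Definition~\ref{def:Aczel-normal}, whereas the paper's contradiction argument only uses that $\mu$ is a natural transformation with injective components and would therefore survive a weakening of that definition. Both are valid proofs of the lemma as stated.
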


\begin{proof}
As usual, we write $n$ for the linear order $\{0,...,n-1\}\subseteq\omega$. First, we prove that $\supp_1(\mu_1(0))=1$. Aiming for a contradiction, assume that the support is empty, and hence it is included in the range of the empty embedding ${l:\emptyset\rightarrow 1}$. By the support condition, we find $\mu_0\in W(\emptyset)$ with $\mu_1(0)=W(l)(\mu_0)$. For ${f_i:1\ni 0\mapsto i\in 2}$, naturality of $\mu$ entails $\mu_2\circ f_i=W(f_i)\circ \mu_1$. In light of $f_0\circ l=f_1\circ l$, we get
\begin{equation*}
    \mu_2(0)=\mu_2(f_0(0))=W(f_0)(\mu_1(0))=W(f_0\circ l)(\mu_0)=W(f_1\circ l)(\mu_0)=\mu_2(1),
\end{equation*}
and hence that $\mu_2$ is not injective. This contradicts the fact that the components of $\mu$ are quasi embeddings.
 Now let us consider an arbitrary partial order $X$ and compute the support of $\mu_X(x)$ for some $x\in X$. We define $\iota$ as the embedding $1\ni 0\mapsto x\in X$. Naturality of $\mu$ entails $\supp_X(\mu_X(x))=\supp_X(W(\iota)(\mu_1(0)))$. Since $\supp$ is natural as well, the latter equals $[\iota]^{<\omega}\circ\supp_1(\mu_1(0))=\{x\}$. This proves statement (a). Moreover, for $x,y\in X$ note that $\mu_X(x)\le_{W(X)}\mu_X(y)\Leftrightarrow x\le_X y$ follows from Definition~\ref{def:Aczel-normal} and the fact that $\supp_X(\mu_X(y))=\{y\}$, so we get  statement (b) as well.
\end{proof}

In the case where well partial orders are preserved, we get the following extensional consequence. Let us recall that $o(X)$ denotes the maximal order type of~$X$, as explained in the introduction.

\begin{proposition}\label{prop:strictly-incr}
If $W$ is an Aczel-normal $\wpo$-dilator, the map $\alpha\mapsto o(W(\alpha))$ on ordinals is strictly increasing.
\end{proposition}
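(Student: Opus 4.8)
Fix ordinals $\alpha<\beta$; the goal is $o(W(\alpha))<o(W(\beta))$. The plan is to start from the inclusion $\iota\colon\alpha\hookrightarrow\beta$ of $\alpha$ as a proper initial segment of $\beta$, which is an embedding. Since $W$ preserves embeddings, $W(\iota)\colon W(\alpha)\to W(\beta)$ is an embedding, hence injective and an isomorphism onto its image $R:=\rng(W(\iota))$, so $o(R)=o(W(\alpha))$ and the standard monotonicity of the maximal order type under quasi embeddings (de Jongh--Parikh) already gives $o(W(\alpha))\leq o(W(\beta))$. The whole task is therefore to upgrade this to a strict inequality, and I would do so by producing a single element of $W(\beta)$ that sits strictly ``above'' the copy of $W(\alpha)$, thereby bumping the maximal order type by at least one.

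The witness is $\mu_\beta(\alpha)$, which is legitimate because $\alpha<\beta$ means $\alpha\in\beta$. By Lemma~\ref{lem:Aczel-normal-basic}(a) we have $\supp_\beta(\mu_\beta(\alpha))=\{\alpha\}$, and since $\{\alpha\}\not\subseteq\rng(\iota)=\alpha$, the support condition shows $\mu_\beta(\alpha)\notin R$. The key computation is that $\mu_\beta(\alpha)\not\leq_{W(\beta)}\tau$ for every $\tau\in R$: writing $\tau=W(\iota)(\tau_0)$, naturality of $\supp$ gives $\supp_\beta(\tau)=[\iota]^{<\omega}(\supp_\alpha(\tau_0))\subseteq\alpha$, so every element of $\supp_\beta(\tau)$ is an ordinal $<\alpha$; by Aczel-normality an inequality $\mu_\beta(\alpha)\leq_{W(\beta)}\tau$ would require some $y\in\supp_\beta(\tau)$ with $\alpha\leq y$, which is impossible. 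Thus $\mu_\beta(\alpha)$ is a maximal element of the suborder $Z:=R\cup\{\mu_\beta(\alpha)\}$ of $W(\beta)$ that lies below no element of $R$.

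To conclude I would invoke the de Jongh--Parikh result that the supremum defining $o(R)=o(W(\alpha))$ is attained, i.e.\ there is a linear extension $\prec$ of $(R,\leq_R)$ of order type exactly $o(W(\alpha))$; extending $\prec$ to $Z$ by placing $\mu_\beta(\alpha)$ on top yields a linear extension of $\leq_Z$ (the only relations to verify involve $\mu_\beta(\alpha)$, and $\mu_\beta(\alpha)\leq_Z\tau$ never holds for $\tau\in R$ by the previous step), of order type $o(W(\alpha))+1$. Hence $o(Z)\geq o(W(\alpha))+1$, and since $Z$ is a suborder of the well partial order $W(\beta)$, monotonicity gives $o(W(\beta))\geq o(Z)\geq o(W(\alpha))+1>o(W(\alpha))$. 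I expect the main subtlety to be precisely this last ``$+1$'': it is essential to use that the maximal order type is a genuine maximum rather than a mere supremum, for otherwise adjoining a top element to arbitrarily long linearizations of $R$ would only recover $o(W(\alpha))$ in the limit and would not produce a strict increase. The remaining ingredients---preservation of embeddings, the support lemma, and monotonicity of $o$ under quasi embeddings---are routine once the witness $\mu_\beta(\alpha)$ and the incomparability claim are in place.
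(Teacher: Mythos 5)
Your proposal is correct and follows essentially the same route as the paper: the witness $\mu_\beta(\alpha)$, the support computation showing it lies outside $\rng(W(\iota))$ and below no element of that range (via Aczel-normality), and the de Jongh--Parikh fact that the maximal order type is attained, so that adjoining a top element yields $o(W(\alpha))+1$. The only cosmetic difference is at the end, where you pass through the suborder $Z$ and monotonicity of $o$ under quasi embeddings, while the paper directly extends the linearization-plus-top-element to a partial order on all of $W(\beta)$ via Lemma~2.2 of de Jongh--Parikh; both are fine.
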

\begin{proof}
Given ordinals $\alpha<\beta$, we write $\iota:\alpha\hookrightarrow \beta$ for the inclusion map. Since~$W$ preserves embeddings, we have that $\rng(W(\iota))$ is an isomorphic copy of $W(\alpha)$ inside~$W(\beta)$.  The aforementioned result by de~Jongh and Parikh~\cite{deJongh-Parikh} guarantees the existence of a linearization $l:o(W(\alpha))\rightarrow W(\alpha)$. We call $\le_l$ the linear ordering on $\rng(W(\iota))$ that is induced by $l$ in the obvious way. 
Now consider the relation
\begin{equation*}
    {\le'}={\le_l}\cup\big\{(\sigma,\mu_\beta(\alpha))\,|\,\sigma \in \rng(W(\iota))\cup\{\mu_\beta(\alpha)\}\big\}.
\end{equation*}
Note that the naturality of $\supp$ entails that $\mu_\beta(\alpha)\not\in\rng(W(\iota))$, because its support is $\{\alpha\}\not\subseteq\alpha=\rng(\iota)$. Therefore, $\le'$ is a linear order with order type $o(W(\alpha))+1$. We claim that $\le '$ extends the restriction of $\le_{W(\beta)}$ to $\rng(W(\iota))\cup\{\mu_\beta(\alpha)\}$. If that is true, one finds a partial ordering on $W(\beta)$ which extends both $\le_{W(\beta)}$ and $\le'$, thus proving that the maximal order type of $W(\beta)$ is at least $o(W(\alpha))+1$. The details can be found in Lemma 2.2 of \cite{deJongh-Parikh}. To prove our claim, first we observe that $\le_l$ is compatible with $\le_{W(\beta)}$ restricted to $\rng(W(\iota))$ because $l$ is a linearization of $W(\alpha)$ and $W(\iota)$ is an embedding. It remains to check that the remaining inequalities in $\le'$ are compatible with $\le_{W(\beta)}$, i.e.\ that $\mu_\beta(\alpha)\not\le_{W(\beta)}\sigma$ for all $\sigma\in\rng(W(\iota))$. This is indeed the case, because $\mu_\beta(\alpha)\le_{W(\beta)}\sigma$ entails $\alpha\le x$ for some $x\in\supp_\beta(\sigma)$, and on the other hand $\sigma\in \rng(W(\iota))$ implies $\supp_\beta(\sigma)\subseteq\alpha$ by naturality of $\supp$.
\end{proof}

Our next aim is to identify a structural condition that characterizes those Aczel-normal $\wpo$-dilators for which $\alpha\mapsto o(W(\alpha))$ is a normal function, i.\,e., continuous at limit ordinals. Let us recall that the latter can fail, as seen in Example~\ref{ex:seq}. Given a partial order~$\leq$, we write $x<x'$ to abbreviate the conjunction of $x\leq x'$ and $x\neq x'$ (or equivalently of $x\leq x'$ and $x'\not\leq x$). We will see that the following condition provides the desired characterization.

\begin{definition}\label{def:strongly-normal}
An Aczel-normal $\po$-dilator~$W$ is strongly normal if we have
\begin{equation*}
x<_X y\text{ for all }x\in\supp_X(\sigma)\quad\Rightarrow\quad\sigma<_{W(X)}\mu_X(y),
\end{equation*}
for any partial order~$X$ and all $y\in X$ and $\sigma\in W(X)$.
\end{definition}

It is instructive to observe that the implication in Definition~\ref{def:strongly-normal} follows from the equivalence in Definition~\ref{def:Aczel-normal} when $W(X)$ is a linear order. Also note that the implication in Definition~\ref{def:strongly-normal} is equivalent to
\begin{equation*}
    \text{there is }y\in\supp_X(\tau)\text{ with }x<_Xy\text{ for all }x\in\supp_X(\sigma)\quad\Rightarrow\quad\sigma<_{W(X)}\tau.
\end{equation*}
This could be taken as an alternative definition of strong normality for $\po$-dilators that are normal but not Aczel-normal. However, the assumption that~$W$ is Aczel-normal will play an important role in the proof of Corollary~\ref{cont_to_SN} below.

\begin{example}\label{ex:multiset}
A finite multiset of elements of a set $X$ is a map $\sigma:X\rightarrow\mathbb N$ such that $\sigma(x)=0$ holds for all but finitely many $x\in X$. It is common to denote such a map by the expression $[x_0,\ldots,x_{k-1}]$ where each $x_i\in X$ occurs $\sigma(x_i)$-many times, and to write $x\in\sigma$ to signify $\sigma(x)\neq 0$. As these notations suggest, informally one wants to think of multisets as collections where the same element can occur more than once. The usual operations on sets are extended to multisets as follows: \begin{equation*}
    \sigma\cap\tau:X\ni x\mapsto \min\{\sigma(x),\tau(x)\},
\end{equation*}
\begin{equation*}
    \sigma\smallsetminus\tau:X\ni x\mapsto \max\{0,\sigma(x)-\tau(x)\}.
\end{equation*}
If $X$ is a partial order, we let $M(X)$ be the set of finite multisets of elements of~$X$. To turn $M$ into a $\po$-dilator, we first stipulate
\begin{equation*}
   \sigma\le_{M(X)}\tau\quad\Leftrightarrow\quad\text{for each }x\in\sigma\smallsetminus\tau\text{ there is a }y\in \tau\smallsetminus\sigma\text{ with }x<_Xy.
\end{equation*}
This ordering is a classical tool in the context of term rewriting (see~\cite{dershowitz-manna}). Furthermore, when $f:X\rightarrow Y$ is a quasi embedding and hence injective, we declare that the function $M(f):M(X)\to M(Y)$ is defined by 
\begin{equation*}
    M(f)(\sigma):Y\ni y\mapsto\begin{cases}\sigma(x)& \text{if $y=f(x)$,}\\
    0 & \text{if $y\notin\rng(f)$,}
    \end{cases}
\end{equation*}
or, more informally, $M(f)([x_0,...,x_{k-1}])=[f(x_0),...,f(x_{k-1})]$. One readily checks that $M(f)$ is a quasi embedding with respect to the multiset ordering defined above, and it is an embedding when the same holds for $f$. 
Finally, the support of a multiset $\sigma\in W(X)$ is defined as the set $\{x\in X|\,\sigma(x)\neq 0\}$. 
One can verify that what we get is indeed a $\po$-dilator. Moreover, there is a quasi embedding from~$(M(X),\le_{M(X)})$ into $\seq(X)$ with the order from Higman's lemma. Therefore, $M(X)$ is a well~partial order whenever the same holds for $X$. It is not hard to see that $M$ is strongly normal with $\mu_X:X\to M(X)$ given by $\mu_X(x)=[x]$. In fact, we already get $\sigma<_{M(X)}\tau$ when any $x\in\supp_X(\sigma)$ admits a $y\in\supp_X(\tau)$ with $x<_X y$. Concerning maximal order types, we have $o(M(X))=\omega^{o(X)}$ whenever~$X$ is a well partial order, as shown in~\cite{Weiermann-termination,MRW-Veblen}. In particular, the map $\alpha\mapsto o(M(\alpha))$ is a normal function. Due to the general Theorem~\ref{thm:strongly-normal-equiv} below, this is indeed guaranteed by the fact that~$M$ is a strongly normal $\wpo$-dilator.
\end{example}

Given an element $x$ of a partial order~$X$, we write $L_X(x)$ or just $L(x)$ for the suborder that consists of all $y\in X$ with $x\not\leq_X y$. The next result follows from work of de Jongh and Parikh~\cite{deJongh-Parikh} (see~\cite[Lemma~2.1]{montalban-linearizations} for an explicit statement).

\begin{lemma}\label{otype}
The maximal order type of a well partial order~$X$ satisfies
\begin{equation*}
    o(X)=\sup\{o(L_X(x))+1\,|\,x\in X\}.
\end{equation*}
\end{lemma}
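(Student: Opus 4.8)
The plan is to rephrase the maximal order type in terms of linear extensions and then establish the two inequalities separately. Recall from the introduction that, by de Jongh and Parikh, the supremum defining $o(X)$ is attained; equivalently, $o(X)$ is the largest order type of a \emph{linear extension} of~$X$, by which I mean a linear order $\le^*$ on the underlying set of~$X$ with $x\le_X y\Rightarrow x\le^* y$. I will also use the two standard facts (both due to de Jongh and Parikh) that $o$ is monotone under passage to suborders and that any linear extension of a suborder $S\subseteq X$ has order type at most $o(S)$. Writing $U_x:=\{y\in X\mid x\le_X y\}$ for the principal filter of~$x$, we have $X=L_X(x)\sqcup U_x$ with $x\in U_x$.

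For the inequality $o(X)\ge\sup\{o(L_X(x))+1\mid x\in X\}$ it suffices to show $o(L_X(x))+1\le o(X)$ for each fixed~$x$. I would consider the suborder $L_X(x)\cup\{x\}$ of~$X$ and order it linearly by taking a maximal linear extension of $L_X(x)$, of order type $o(L_X(x))$, and placing~$x$ strictly on top. This is a genuine linear extension of $L_X(x)\cup\{x\}$: the only relations to check are those involving~$x$, and since every $y\in L_X(x)$ satisfies $x\not\le_X y$, there is no relation $x<_X y$ that placing~$x$ on top could violate. Hence $o(L_X(x)\cup\{x\})\ge o(L_X(x))+1$, and monotonicity of~$o$ under suborders yields $o(X)\ge o(L_X(x))+1$, as required.

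For the reverse inequality I would take a linear extension $\le^*$ of~$X$ of order type $\theta:=o(X)$ and enumerate $X=\{x_\xi\mid\xi<\theta\}$, where $x_\xi$ is the element in $\le^*$-position~$\xi$. The key observation is that for every $\xi<\theta$ the set of $\le^*$-predecessors $\{x_\eta\mid\eta<\xi\}$ is contained in $L_X(x_\xi)$: if some $\eta<\xi$ had $x_\xi\le_X x_\eta$, then (as $x_\xi\ne x_\eta$) the extension property would force $x_\xi\le^* x_\eta$, i.e.\ $\xi\le\eta$, a contradiction. Thus $\le^*$ restricts to a linear extension of the suborder $\{x_\eta\mid\eta<\xi\}$ of $L_X(x_\xi)$, of order type~$\xi$, whence $\xi\le o(\{x_\eta\mid\eta<\xi\})\le o(L_X(x_\xi))$. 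It follows that $\xi+1\le o(L_X(x_\xi))+1\le\sup\{o(L_X(x))+1\mid x\in X\}$ for every $\xi<\theta$, and taking the supremum over $\xi<\theta$ gives $\theta=o(X)\le\sup\{o(L_X(x))+1\mid x\in X\}$.

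I expect the main obstacle to lie in the $\le$ direction, specifically in the bookkeeping that converts the positional enumeration into a clean bound. The crucial point is recognizing that the $\le^*$-predecessors of $x_\xi$ form a suborder of $L_X(x_\xi)$, using only that $\le^*$ extends $\le_X$ together with the very definition of $L_X(x)$; after this, monotonicity of the maximal order type does the rest. The $\ge$ direction is then routine once one checks that adjoining~$x$ on top of $L_X(x)$ respects~$\le_X$.
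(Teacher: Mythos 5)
Your argument is correct, and it is worth noting that the paper itself gives no proof of this lemma: it is simply attributed to de Jongh and Parikh, with a pointer to Montalb\'an for an explicit statement. So your proposal supplies a genuine argument where the paper has only a citation. Both directions check out. For $\geq$, placing $x$ on top of a maximal linearization of $L_X(x)$ does give a linear extension of the suborder $L_X(x)\cup\{x\}$, precisely because $x\not\leq_X y$ for all $y\in L_X(x)$ rules out any violated relation; this is the same device the paper uses explicitly in the proof of Proposition~\ref{prop:strictly-incr}, where $\mu_\beta(\alpha)$ is placed on top of a linearization of $\rng(W(\iota))$. For $\leq$, your key observation that the $\le^*$-predecessors of $x_\xi$ all lie in $L_X(x_\xi)$ is exactly right, and the conclusion $\xi\le o(L_X(x_\xi))$ followed by $\sup_{\xi<\theta}(\xi+1)=\theta$ (which also covers the successor and empty cases) is clean. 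The only external inputs you use --- attainment of the supremum defining $o(X)$, and the fact that a linearization of a suborder extends to one of the whole order (hence monotonicity of $o$) --- are the same de Jongh--Parikh facts \cite{deJongh-Parikh} that the paper already relies on elsewhere, so nothing is smuggled in.
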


Let us now derive that strong normality is sufficient and in many cases necessary to ensure continuity at limit stages. We will later identify a condition that allows to remove the restriction to linear orders in the following statement~(i).

\begin{theorem}\label{thm:strongly-normal-equiv}
Let $W$ be an Aczel-normal $\wpo$-dilator. Then the following statements are equivalent:
\begin{enumerate}[label=(\roman*)]
    \item The implication from Definition~\ref{def:strongly-normal} holds whenever $X$ is a linear order.
    \item If $\lambda$ is a limit ordinal and we have $\sigma \in W(\lambda)$, then there exists an $\alpha<\lambda$ such that $\tau\in L_{W(\lambda)}(\sigma)$ implies $\supp_\lambda(\tau)\subseteq \alpha$.
    \item The map $\alpha\mapsto o(W(\alpha))$ is a normal function.
\end{enumerate}
\end{theorem}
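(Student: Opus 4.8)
The plan is to prove the cycle of implications (i)$\Rightarrow$(ii)$\Rightarrow$(iii)$\Rightarrow$(i). Throughout I abbreviate $F(\alpha):=o(W(\alpha))$ and observe that, by Proposition~\ref{prop:strictly-incr}, this function is already strictly increasing; hence for~(iii) only continuity at limits, i.e.\ $F(\lambda)=\sup_{\alpha<\lambda}F(\alpha)$, remains to be established or refuted. The engine for relating $F$ to the combinatorics of $W$ will be Lemma~\ref{otype}, which gives $F(\lambda)=\sup\{o(L_{W(\lambda)}(\sigma))+1\mid\sigma\in W(\lambda)\}$, together with the support condition: if $\supp_\lambda(\tau)\subseteq\alpha$, then $\tau$ lies in the isomorphic copy $\rng(W(\iota_\alpha))$ of $W(\alpha)$ inside $W(\lambda)$, where $\iota_\alpha:\alpha\hookrightarrow\lambda$ is the inclusion.

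For (i)$\Rightarrow$(ii), I fix a limit $\lambda$ and $\sigma\in W(\lambda)$, and choose $\alpha<\lambda$ with $\supp_\lambda(\sigma)\subseteq\alpha$ (possible, as the support is finite and $\lambda$ is a limit). If $\supp_\lambda(\tau)\not\subseteq\alpha$, then some $y\in\supp_\lambda(\tau)$ satisfies $x<_\lambda y$ for every $x\in\supp_\lambda(\sigma)$, because $\lambda$ is linearly ordered; the reformulation of strong normality recorded after Definition~\ref{def:strongly-normal} then yields $\sigma<_{W(\lambda)}\tau$, so that $\tau\notin L_{W(\lambda)}(\sigma)$. This is the contrapositive of~(ii). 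For (ii)$\Rightarrow$(iii), given a limit $\lambda$ and $\sigma\in W(\lambda)$, statement~(ii) provides $\alpha<\lambda$ with $L_{W(\lambda)}(\sigma)\subseteq\rng(W(\iota_\alpha))$, whence $o(L_{W(\lambda)}(\sigma))\le F(\alpha)$ since it is a suborder of a copy of $W(\alpha)$; as $F$ is strictly increasing and $\alpha+1<\lambda$, this gives $o(L_{W(\lambda)}(\sigma))+1\le F(\alpha)+1\le F(\alpha+1)\le\sup_{\gamma<\lambda}F(\gamma)$. Taking the supremum over $\sigma$ and invoking Lemma~\ref{otype} yields $F(\lambda)\le\sup_{\gamma<\lambda}F(\gamma)$, and the reverse inequality is immediate from monotonicity.

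The crux is (iii)$\Rightarrow$(i), which I approach by contraposition. Assuming~(i) fails, I first localise the failure to a finite linear order: taking $a:=\supp_X(\sigma)\cup\{y\}$ and pulling $\sigma$ and $\mu_X(y)$ back along $a\hookrightarrow X$ via the support condition and the naturality of $\supp$ and $\mu$, I obtain (after identifying $a$ with the ordinal $n+1$ and using that $y$ is its top element) an element $\sigma_0\in W(n+1)$ with $\supp(\sigma_0)\subseteq n$ and $\sigma_0\not<_{W(n+1)}\mu_{n+1}(n)$; here Aczel-normality forces $\mu_{n+1}(n)\not\le_{W(n+1)}\sigma_0$, and the two supports differ by Lemma~\ref{lem:Aczel-normal-basic}, so in fact $\sigma_0\not\le_{W(n+1)}\mu_{n+1}(n)$. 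I then select a limit $\lambda$ lying in the intersection of two club classes: the additively indecomposable ordinals, and the ordinals closed under $F$ (so that $\sup_{\gamma<\lambda}F(\gamma)=\lambda$). For every $\gamma\in[n,\lambda)$ I embed $n+1\hookrightarrow\lambda$ sending $n\mapsto\gamma$ while keeping a single fixed small image of $\supp(\sigma_0)$; functoriality then produces one fixed element $\bar\sigma\in W(\lambda)$ together with $\mu_\lambda(\gamma)$, and since $W$ preserves embeddings the relation $\sigma_0\not\le\mu_{n+1}(n)$ transfers to $\bar\sigma\not\le_{W(\lambda)}\mu_\lambda(\gamma)$, i.e.\ $\mu_\lambda(\gamma)\in L_{W(\lambda)}(\bar\sigma)$. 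As $\mu_\lambda$ is an order embedding (Lemma~\ref{lem:Aczel-normal-basic}), the family $\{\mu_\lambda(\gamma)\mid\gamma\in[n,\lambda)\}$ is a chain whose order type equals that of $[n,\lambda)$, which is $\lambda$ by additive indecomposability; hence $o(L_{W(\lambda)}(\bar\sigma))\ge\lambda$, and Lemma~\ref{otype} gives $F(\lambda)\ge\lambda+1>\lambda=\sup_{\gamma<\lambda}F(\gamma)$, refuting~(iii).

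The main obstacle is precisely the choice of~$\lambda$ in this last step. A failure of strong normality yields, a priori, only a single misplaced element $\bar\sigma$ together with the chain of points $\mu_\lambda(\gamma)$ below it, and this chain on its own only reaches height about $\lambda$; it is by no means clear how to manufacture a jump of size $\sup_{\gamma<\lambda}F(\gamma)$ at an arbitrary limit. The argument succeeds exactly because, at an additively indecomposable closure point of $F$, the supremum $\sup_{\gamma<\lambda}F(\gamma)$ collapses to $\lambda$, so that the modest excess $\lambda+1$ already overshoots it. Verifying that such $\lambda$ exist (nonemptiness, indeed unboundedness, of the intersection of the two club classes) and that the constructed chain genuinely has order type $\lambda$ are the delicate points, whereas the localisation of the counterexample and the transfer of the finite incomparability are routine consequences of functoriality and the support condition.
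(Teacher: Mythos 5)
Your proposal is correct and follows essentially the same route as the paper's proof: the cycle (i)$\Rightarrow$(ii)$\Rightarrow$(iii)$\Rightarrow$(i) with the same localisation of a failure of~(i) to a finite linear order $n+1$, the same family of embeddings sending the top point to varying $\gamma<\lambda$ to produce a single element $\bar\sigma$ whose $L$-set contains a chain $\{\mu_\lambda(\gamma)\}$ of length $\lambda$, and the same choice of a limit $\lambda$ closed under $\alpha\mapsto o(W(\alpha))$ (the paper builds it explicitly as $\sup_n\lambda_n$ rather than via club classes, and your additional requirement of additive indecomposability is harmless but superfluous, since $[n,\lambda)$ has order type $\lambda$ for every infinite $\lambda$).
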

\begin{proof}
To show that~(i) implies~(ii), we consider an element $\sigma \in W(\lambda)$ for a limit~$\lambda$. If for some $\tau \in W(\lambda)$ we have $\sigma\not \le_{W(\lambda)}\tau$, then $\sigma\not \le_{W(\lambda)}\mu_\lambda(\gamma)$ for all $\gamma$ in $\supp_\lambda(\tau)$, as otherwise we would get $\sigma\le\mu_\lambda(\gamma)\le\tau$. The strong normality property~(i) and linearity entail that each $\gamma\in\supp_\lambda(\tau)$ must be smaller than or equal to the maximum of $\supp_\lambda(\sigma)$. Given that $\lambda$ is a limit, we can conclude by picking an $\alpha<\lambda$ that is larger than this maximum. To see that (ii) implies~(iii), let us first recall that $\alpha\mapsto o(W(\alpha))$ is strictly increasing by Proposition~\ref{prop:strictly-incr}. It remains to derive continuity at a limit ordinal~$\lambda$. Given $\sigma\in W(\lambda)$, pick an $\alpha$ as in~(ii) and consider the inclusion $\iota:\alpha\hookrightarrow\lambda$. In view of $\rng(\iota)=\alpha$, the support condition entails $L(\sigma)\subseteq\rng(W(\iota))$, which yields $o(L(\sigma))\leq o(W(\alpha))$ and hence
\begin{equation*}
o(L(\sigma))+1\leq o(W(\alpha))+1\leq o(W(\alpha+1)).
\end{equation*}
By Lemma~\ref{otype} we get $o(W(\lambda))\leq\sup\{o(W(\beta))\,|\,\beta<\lambda\}$, as needed for~(iii). Finally, we show that (iii) fails when~(i) does. In this case, we have a linear order~$X$ as well as elements $y\in X$ and $\sigma\in W(X)$ with $x<_X y$ for all $x\in \supp_X(\sigma)$ and yet~$\sigma\not<_{W(X)}\mu_X(y)$. Note that we even get $\sigma\not\leq_{W(X)}\mu_X(y)$, as Lemma~\ref{lem:Aczel-normal-basic} yields $\supp_X(\mu_X(y))=\{y\}$ and hence $\sigma\neq\mu_X(y)$. Due to the support condition, we may assume $X=\supp_X(\sigma)\cup\{y\}$ and indeed $X=|\supp_X(\sigma)|+1=n+1$ with $y=n$, as $X$ is linear. Let $\lambda$ be a limit for which $\alpha<\lambda$ implies $o(W(\alpha))<\lambda$. To see that such a $\lambda$ exists, note that we have $\gamma\leq o(W(\gamma))$ since $\gamma\mapsto o(W(\gamma))$ is strictly increasing. We can can now take $\lambda=\sup_{n<\omega}\lambda_n$ with $\lambda_0=0$ and $\lambda_{n+1}=o(W(\lambda_n))+1$. For all $\alpha<\lambda$ with $\alpha\geq n$, we define an embedding $f_\alpha:n+1\rightarrow\lambda$ by
\begin{equation*}
    f_\alpha(j)=\begin{cases}j & \text{ if $j<n$,}\\
                                \alpha &\text{ if $j=n$.}
    \end{cases} 
\end{equation*}
We have $W(f_\alpha)(\sigma)\not\leq_{W(\lambda)}W(f_\alpha)(\mu_{n+1}(n))$ for all~$\alpha$ as above. The naturality of $\mu$ entails that $W(f_\alpha)(\mu_{n+1}(n))=\mu_\lambda(f_\alpha(n))=\mu_\lambda(\alpha)$. Moreover, for the embedding $l:n\to n+1$ with $l(i)=i$, the support condition entails that we have $\sigma=W(l)(\sigma_0)$ for some $\sigma_0\in W(n)$. Now, for any $n\leq\alpha<\lambda$, we note that $f_n\circ l=f_\alpha\circ l$ entails
\begin{equation*}
    \sigma^*:=W(f_n)(\sigma)=W(f_n\circ l)(\sigma_0)=W(f_\alpha\circ l)(\sigma_0)=W(f_\alpha)(\sigma).
\end{equation*}
 We learn that $\{ \mu_\lambda(\alpha):n\leq\alpha<\lambda\}\subseteq L(\sigma^*)$ is an increasing sequence of length~$\lambda$, since $\mu_\lambda$ is an embedding. This yields
 \begin{equation*}
     \sup\{o(W(\alpha))\,|\,\alpha<\lambda\}\leq\lambda\leq o(L(\sigma^*))<o(W(\lambda)),
 \end{equation*}
 so that (iii) does indeed fail.
\end{proof}

In the rest of this section, we identify a condition under which statement~(i) from the previous theorem extends from linear to partial orders, i.\,e., under which we can show that strong normality is necessary. As mentioned in the introduction, we are most interested in $\wpo$-dilators~$W$ for which $o(W(X))$ does only depend on the maximal order type of~$X$. This makes it natural to focus on the case where the underlying set of $W(X)$ does not depend on the order on~$X$. We can capture this case via the following condition, which has already been studied in~\cite{freund-Bachmann-Howard-derivatives}.

\begin{definition}
A $\po$-dilator is flat if the support condition holds for all quasi embeddings, i.e., if we have
\begin{equation*}
\supp_Y(\sigma)\subseteq\rng(f)\quad\To\quad\sigma\in\rng(W(f)),
\end{equation*}
for any quasi embedding~$f:X\to Y$ and all~$\sigma\in W(Y)$.
\end{definition}

We cannot expect that $\sigma\leq_{W(X)}\tau$ will entail $W(f)(\sigma)\leq_{W(Y)}W(f)(\tau)$ whenever the map $f:X\to Y$ is a quasi embedding. At the same time, the condition that $f$ must be an embedding may appear unnecessarily strong. Indeed, we have already mentioned the intuition that the elements of~$W(X)$ are structures or graphs with labels from~$X$. In this setting, it makes sense to assume that $\sigma\leq_{W(X)}\tau$ is witnessed by a map that sends each label $x\in\supp_X(\sigma)$ to a label $y\in\supp_X(\tau)$ with $x\leq_X y$. The following definition puts the focus on these crucial inequalities.

\begin{definition}\label{graph-like}
A $\po$-dilator~$W$ is called graph-like if we have
\begin{equation*}
    \sigma\leq_{W(X)}\tau\quad\To\quad W(f)(\sigma)\leq_{W(Y)}W(f)(\tau)
\end{equation*}
for any quasi embedding $f:X\to Y$ such that $x\leq_X y$ entails $f(x)\leq_Y f(y)$ for all elements $x\in\supp_X(\sigma)$ and $y\in\supp_X(\tau)$.
\end{definition}

A large number of natural $\po$-dilators from the literature are flat and graph-like, including those from Examples~\ref{ex:seq} and~\ref{ex:multiset}. As promised, we can now formulate an elegant consequence of Theorem~\ref{thm:strongly-normal-equiv}.

\begin{corollary}\label{cont_to_SN}
Consider an Aczel-normal $\wpo$-dilator~$W$. If the latter is flat and graph-like, then the following are equivalent:
\begin{enumerate}[label=(\roman*)]
\item The $\wpo$-dilator~$W$ is strongly normal.
\item The map $\alpha\mapsto o(W(\alpha))$ is a normal function on the ordinals.
\end{enumerate}
\end{corollary}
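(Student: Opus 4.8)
The plan is to deduce everything from Theorem~\ref{thm:strongly-normal-equiv}. Statement~(ii) of the corollary coincides with statement~(iii) of that theorem, and is therefore equivalent to its statement~(i), which is precisely the defining implication of strong normality (Definition~\ref{def:strongly-normal}) restricted to \emph{linear} orders. Since being strongly normal means that this implication holds for \emph{all partial} orders, and every linear order is a partial order, the direction (i)$\To$(ii) of the corollary is immediate. All the work will go into the converse: assuming that the implication of Definition~\ref{def:strongly-normal} holds whenever $X$ is linear, I will use flatness and graph-likeness to upgrade it to an arbitrary partial order.

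First I would reduce to the finite case. Consider a partial order~$X$, an element $y\in X$ and some $\sigma\in W(X)$ with $x<_X y$ for all $x\in\supp_X(\sigma)$. Put $a:=\supp_X(\sigma)\cup\{y\}$, a finite suborder of~$X$, and let $\iota:a\hookrightarrow X$ be the inclusion embedding. The support condition yields $\sigma=W(\iota)(\sigma_0)$ for some $\sigma_0\in W(a)$, and naturality of $\supp$ gives $\supp_a(\sigma_0)=\supp_X(\sigma)$, so that $x<_a y$ holds for each $x\in\supp_a(\sigma_0)$. As $W$ preserves embeddings, $W(\iota)$ is an order embedding, hence injective; together with the naturality identity $W(\iota)(\mu_a(y))=\mu_X(y)$, this shows that an inequality $\sigma_0<_{W(a)}\mu_a(y)$ would transfer back to the desired $\sigma<_{W(X)}\mu_X(y)$. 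It thus suffices to establish strong normality on the finite order~$a$.

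The crucial step handles this finite case by passing through a linear extension. Let $a_L$ denote~$a$ equipped with a linear order $\le_{a_L}$ extending $\le_a$ (for a finite poset such an extension is available with no appeal to choice). The identity map $g:a_L\to a$ is a quasi embedding, since $z\le_a z'$ implies $z\le_{a_L}z'$. Because $g$ is in general not an embedding, the ordinary support condition does not apply to it, and this is exactly where \emph{flatness} enters: it guarantees that $W(g)$ is onto, so I may pick $\sigma_0'\in W(a_L)$ with $W(g)(\sigma_0')=\sigma_0$. Naturality of~$\supp$ gives $\supp_{a_L}(\sigma_0')=\supp_a(\sigma_0)$, whence $x<_{a_L}y$ for every $x$ in this support. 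The linear instance of strong normality, available by hypothesis, now yields $\sigma_0'<_{W(a_L)}\mu_{a_L}(y)$, and in particular $\sigma_0'\le_{W(a_L)}\mu_{a_L}(y)$.

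It remains to push this inequality back down along~$g$, which is where \emph{graph-likeness} is used. Applying Definition~\ref{graph-like} with $f=g$, I must check that $x\le_{a_L}y'\To g(x)\le_a g(y')$ holds for all $x\in\supp_{a_L}(\sigma_0')$ and all $y'\in\supp_{a_L}(\mu_{a_L}(y))=\{y\}$; since $y'=y$ and each such $x$ already satisfies $x<_a y$, the conclusion $x\le_a y$ holds outright, so the condition is met. Graph-likeness then gives $\sigma_0=W(g)(\sigma_0')\le_{W(a)}W(g)(\mu_{a_L}(y))=\mu_a(y)$, using naturality of~$\mu$ once more, and $\sigma_0\ne\mu_a(y)$ since $\supp_a(\mu_a(y))=\{y\}$ by Lemma~\ref{lem:Aczel-normal-basic} while $y\notin\supp_a(\sigma_0)$. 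This yields $\sigma_0<_{W(a)}\mu_a(y)$, finishing the finite case and hence the proof. The main obstacle is the complementary and simultaneous use of the two hypotheses: flatness supplies the preimage $\sigma_0'$ over the linear extension (the direction of the support condition that fails for mere quasi embeddings), while graph-likeness transports the resulting inequality back down (the direction for which $W$ need not be monotone on quasi embeddings), the hypothesis $x<_a y$ being exactly what renders the graph-like compatibility condition automatic.
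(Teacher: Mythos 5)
Your proposal is correct and follows essentially the same route as the paper: the paper argues the contrapositive, using flatness to lift a counterexample along a quasi embedding from a finite linearization and graph-likeness to transport the resulting inequality back to the partial order, which is exactly the mechanism you employ in the direct direction. Your explicit reduction to the finite suborder $a=\supp_X(\sigma)\cup\{y\}$ and the passage through a linear extension $a_L$ correspond precisely to the paper's step of assuming $Y=\supp_Y(\tau)\cup\{y\}$ and choosing the linearization $l:n+1\to Y$.
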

\begin{proof}
It suffices to show that statement~(i) from Theorem~\ref{thm:strongly-normal-equiv} implies strong normality, under the present hypotheses. Aiming at the contrapositive, we assume that $W$ is not strongly normal. We then have a partial order~$Y$ as well as elements $\tau\in W(Y)$ and $y\in Y$ with $x<_Y y$ for all $x\in\supp_Y(\tau)$ but still~$\tau\not<_{W(Y)}\mu_Y(y)$. We in fact get $\tau\not\leq_{W(Y)}\mu_Y(y)$ and may assume $Y=\supp_Y(\tau)\cup\{y\}$, as before. Pick a linearization of~$\supp_Y(\tau)$, i.\,e., a surjective quasi embedding $l_0:n\to\supp_Y(\tau)$ for~$n=|\supp_Y(\tau)|$. Note that we get a quasi embedding $l:n+1\to Y$ by setting $l(n):=y$ and $l(i):=l_0(i)$ for $i<n$. Given that $W$ is flat, we obtain $\tau=W(l)(\sigma)$ for some $\sigma\in W(n+1)$. The naturality of supports yields $\supp_{n+1}(\sigma)=n$. To refute statement~(i) from Theorem~\ref{thm:strongly-normal-equiv}, we show $\sigma\not<_{W(n+1)}\mu_{n+1}(n)$. If the last inequality did hold, the assumption that $W$ is graph-like would yield
\begin{equation*}
    \tau=W(l)(\sigma)\leq_{W(Y)} W(l)(\mu_{n+1}(n))=\mu_Y(l(n))=\mu_Y(y),
\end{equation*}
which contradicts an assumption from above.
\end{proof}

One can show the following by adapting the previous proof in a rather straightforward way.

\begin{remark}
The implication (ii)$\Rightarrow$(i) also holds when the dilator $W$ is flat and satisfies the following slightly modified version of Definition \ref{graph-like}: We have
\begin{equation*}
    \sigma<_{W(X)}\mu_X(y)\quad\To\quad W(f)(\sigma)<_{W(Y)}\mu_Y(f(y))
\end{equation*}
for any quasi embedding $f:X\rightarrow Y$ such that $x\le_Xy$ entails $f(x)\le_Yf(y)$ for all elements $x\in \supp_X(\sigma)$. To better appreciate the similarity, recall that we have $\mu_Y(f(y))=W(f)(\mu_X(y))$. It is worth noting that this alternative condition, although less intuitive than the one from Definiton \ref{graph-like}, is automatically verified by all strongly normal dilators. To see that this is the case, consider a quasi embedding $f$ as described above, and assume that $\sigma<_{W(X)}\mu_X(y)$. Since $W$ is Aczel-normal, we have that any $x\in\supp_X(\sigma)$ is strictly smaller than $y$. Then, for any such $x$, we get $f(x)<_Y f(y)$, because $f$ is injective. This means that $f(y)$ is strictly greater than all the  $x'$ in $\supp_Y(W(f)(\sigma))$, as $\supp_Y(W(f)(\sigma))=[f]^{<\omega}(\supp_X(\sigma))$. By strong normality, we conclude that $W(f)(\sigma)<\mu_Y(f(y))$.
\end{remark}

\section{Normality and the uniform Kruskal theorem}\label{sect:uniform-Kruskal}

As mentioned in the introduction, it was shown by Freund, Rathjen and Weier\-mann~\cite{frw-kruskal} that $\Pi^1_1$-comprehension is equivalent to a uniform Kruskal theorem for normal $\wpo$-dilators, over a weak base theory from reverse mathematics. In this section, we show that the much weaker principle of $\Pi^1_1$-induction along~$\mathbb N$ is equivalent to the uniform Kruskal theorem for $\wpo$-dilators that are strongly~normal.

Let us briefly discuss the representation of $\po$-dilators in reverse mathematics. The key idea is that dilators are determined by their restrictions to finite orders, as Girard \cite{girard-pi2} had observed in the linear case. In order to show this, we first fix a collection $\po_0\subseteq\po$ that contains exactly one isomorphic copy $|a|$ of each finite partial order~$a$. Let us also fix isomorphisms $\en_a:|a|\rightarrow a$. For any partial order~$X$, each element $\sigma\in W(X)$ can be identified with the unique pair $(a,\sigma_0)$ such that we have $a=\supp_X(\sigma)$ and $\sigma=W(\iota_a^X\circ\en_a)(\sigma_0)$. Here $\iota_a^X$ denotes the inclusion of $a$ into $X$, and the support condition guarantees the existence of an appropriate $\sigma_0$.

The identification described above preserves the ordering in the following sense: consider $\sigma,\tau\in W(X)$ represented as pairs $(a,\sigma_0),(b,\tau_0)$.
Then one has 
\begin{equation*}
    \sigma\le_{W(X)}\tau\quad\Leftrightarrow\quad W(|\iota_a^{a\cup b}|)(\sigma_0)\le_{W(|a\cup b|)}W(|\iota_b^{a\cup b}|)(\tau_0),
\end{equation*}
where, for a quasi embedding $f:a\rightarrow b$ with $a,b$ finite, we define $|f|:|a|\rightarrow |b|$ as the unique quasi embedding such that $f\circ \en_a=\en_b\circ \,|f|$.  Moreover, if $\sigma$ is represented by $(a,\sigma_0)$, naturality of the support entails 
\begin{equation*}
    a=\supp_{X}(\sigma)=[\iota^X_a\circ\en_a]^{<\omega}\circ\supp_{|a|}(\sigma_0)
\end{equation*}
and hence $\supp_{|a|}(\sigma_0)=|a|$. So $(|a|,\sigma_0)$ is contained in the trace, defined as 
\begin{equation*}
    \tr(W)=\{(a,\sigma)|\,a\in \po_0\text{ and }\sigma \in W(a)\text{ with } \supp_a(\sigma)=a\}.
\end{equation*}
As promised, $\po$-dilators can thus be represented in $\mathsf{RCA}_0$ (relative to a fixed choice of isomorphisms $\en_a:|a|\to a$). Full details can be found in~\cite{frw-kruskal}, which is also the source of the following key notion.

\begin{definition}\label{def:fixpoint}
A Kruskal fixed point of a $\po$-dilator $W$ is a pair $(X,\kappa)$, where~$X$ is a partial order and $\kappa:W(X)\rightarrow X$ is a bijection such that
\begin{equation*}
    \kappa(\sigma)\le_{X}\kappa(\tau) \quad \Leftrightarrow \quad \sigma\le_{W(X)}\tau \text{ or there is a } y\in\supp_X(\tau)\text{ with }\kappa(\sigma)\le_X y
\end{equation*}
holds for all $\sigma,\tau\in W(X)$. Moreover, $(X,\kappa)$ is initial if for any other Kruskal fixed point $(X',\kappa')$ there is a unique quasi embedding $f:X\rightarrow X'$ with $f\circ\kappa=\kappa'\circ W(f)$.
\end{definition}

As usual, the universal property ensures that initial Kruskal fixed points are unique up to isomorphism. Concerning existence, the following construction in~$\mathsf{RCA}_0$ has been given in~\cite{frw-kruskal}. First, generate recursively the collection of all terms of the form $\circ(a,\sigma)$, where $\sigma$ is the second component of some pair in $\tr(W)$ and $a$ is a finite (possibly empty) set of previously constructed terms of the same form. We declare that the length of a term is given by
$l(\circ(a,\sigma))=1+\sum_{s\in a}2\cdot l(s)$. We can use simultaneous recursion on these lengths to define a subset~$\tw$ of the indicated collection of terms and a binary relation~$\le_\tw$ on this subset. In the following, the condition that $\le_\tw$ should be a partial order on~$a$ is included to ensure that $|a|$ is defined, even though part~(a) of Theorem~\ref{thm:fixpoint} will mean that it is redundant.
\begin{itemize}
    \item By recursion on $l(r)$, we declare that $r=\circ(a,\sigma)$ is an element of $\tw$ if we have $a\subseteq\tw$, the restriction of $\le_\tw$ to $a$ is a partial order, and we have $(|a|,\sigma)\in\tr(W)$ with respect to this order.
    \item By recursion on $l(s)+l(t)$, we declare that $s=\circ(a,\sigma)\le_\tw\circ(b,\tau)=t$ with $s,t\in\tw$ holds if and only if we have $s\le_\tw r$ for some $r\in b$ or it is the case that $a\cup b$ is partially ordered by $\le_\tw$ and we have
    \begin{equation*}
       W(|\iota_a^{a\cup b}|)(\sigma)\le_{W(|a\cup b|)}W(|\iota_b^{a\cup b}|)(\tau).
    \end{equation*}
\end{itemize}
The construction above is available for arbitrary $\po$-dilators. In the normal case, Proposition 3.6 and Theorem 3.8 of \cite{frw-kruskal} tell us the following. 
\begin{theorem}[$\mathsf{RCA}_0$]\label{thm:fixpoint}
Let $W$ be a normal $\po$-dilator.
\begin{enumerate}[label=(\alph*)]
    \item The relation $\le_\tw$ is a partial order on $\tw$.
    \item We obtain an initial Kruskal fixed point $(\tw,\kappa)$ of~$W$ by stipulating that we have $\kappa(\sigma)=\circ(a,\sigma_0)$ with $a=\supp_{\tw}(\sigma)$ and $\sigma=W(\iota_a^\tw\circ\en_a)(\sigma_0)$.
\end{enumerate}
\end{theorem}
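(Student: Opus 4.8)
The plan is to prove both parts by induction on the length function~$l$, regarding the simultaneous recursion that defines $\tw$ and $\le_\tw$ as fixed and arguing about terms of bounded length. For part~(a), I would first record the \emph{subterm property}: if $r\in a$ then $r\le_\tw\circ(a,\sigma)$, which is immediate from the first disjunct in the definition of $\le_\tw$ together with reflexivity, and $r\neq\circ(a,\sigma)$ since $l(r)<l(\circ(a,\sigma))$. Reflexivity of $\le_\tw$ itself follows by taking $a=b$ in the main clause, where $W(|\iota_a^a|)$ is the identity and $\le_{W(|a|)}$ is reflexive. The decisive ingredient, and the first place where the normality of~$W$ enters, is a height lemma. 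Writing $h(\circ(a,\sigma))=1+\max\{h(r)\mid r\in a\}$ (with $\max\emptyset=0$), I would prove by induction on $l(s)+l(t)$ that $s\le_\tw t$ implies $h(s)\le h(t)$. If $s\le_\tw t$ holds via the first disjunct, say $s\le_\tw r$ with $r\in b$, then $h(s)\le h(r)<h(t)$ by the inductive hypothesis and the subterm property. If it holds via the main inequality $W(|\iota_a^{a\cup b}|)(\sigma)\le_{W(|a\cup b|)}W(|\iota_b^{a\cup b}|)(\tau)$, then Definition~\ref{def:normal} shows that every element of the support~$a$ is dominated by an element of the support~$b$; thus each immediate subterm of~$s$ lies $\le_\tw$ below an immediate subterm of~$t$, and the inductive hypothesis yields $h(s)\le h(t)$. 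In particular $s\le_\tw r$ is impossible when $r$ is a proper subterm of~$s$.

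For transitivity I would induct on $l(s)+l(t)+l(u)$, splitting on how $s\le_\tw t$ and $t\le_\tw u$ are derived. The case in which $t\le_\tw u$ comes from the first disjunct reduces directly to a shorter instance. When $s\le_\tw t$ uses the first disjunct, say $s\le_\tw r$ with $r\in b$, while $t\le_\tw u$ uses the main inequality, normality again supplies an $r''\in c$ with $r\le_\tw r''$, and two applications of the inductive hypothesis give $s\le_\tw u$. The remaining case, in which both steps are main inequalities, needs no normality: one transports both inequalities along the inclusions $|\iota^{a\cup b\cup c}|$, which $W$ preserves as embeddings, into $W(|a\cup b\cup c|)$, composes them there, and then uses the support condition to descend the resulting inequality into $W(|a\cup c|)$, since the relevant supports are contained in $a\cup c$. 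Antisymmetry then follows from the height lemma: if $s\le_\tw t$ and $t\le_\tw s$ and either inequality used a first disjunct, transitivity would yield $v\le_\tw w$ for a proper subterm~$w$ of~$v$, contradicting the height lemma; so both are main inequalities in the common order $W(|a\cup b|)$, whose antisymmetry forces $W(|\iota_a^{a\cup b}|)(\sigma)=W(|\iota_b^{a\cup b}|)(\tau)$, whence equal supports give $a=b$ and injectivity of the embedding $W(|\iota_a^{a\cup b}|)$ gives $\sigma=\tau$, i.e.\ $s=t$.

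For part~(b), the map $\kappa$ is exactly the correspondence between elements of $W(\tw)$ and pairs $(a,\sigma_0)\in\tr(W)$ with $a\subseteq\tw$ that is set up before Definition~\ref{def:fixpoint}; it is a bijection onto $\tw$ because $\tw$ consists precisely of the terms $\circ(a,\sigma_0)$ with $a\subseteq\tw$ finite and $(|a|,\sigma_0)\in\tr(W)$. The Kruskal fixed point equivalence is then read off the definition of $\le_\tw$: its first disjunct ``$\kappa(\sigma)\le_\tw r$ for some $r\in b$'' is literally ``there is $y\in\supp_\tw(\tau)$ with $\kappa(\sigma)\le_\tw y$'', while its main inequality is, by the order-preservation of the representation recalled before the definition, equivalent to $\sigma\le_{W(\tw)}\tau$. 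For initiality, given another Kruskal fixed point $(X',\kappa')$, I would define $f\colon\tw\to X'$ by recursion on~$l$ via $f(\kappa(\sigma))=\kappa'(W(f)(\sigma))$, and simultaneously verify that $f$ is a quasi embedding; this recursion is well founded because $W(f)(\sigma)$ depends only on the values of~$f$ on $\supp_\tw(\sigma)$, whose members are shorter terms. To check that $f$ reflects the order, I would apply the fixed point condition for $X'$ to $f(s)\le_{X'}f(t)$: either $W(f)(\sigma)\le_{W(X')}W(f)(\tau)$, in which case the fact that $W$ sends quasi embeddings to quasi embeddings makes $W(f)$ order-reflecting, so $\sigma\le_{W(\tw)}\tau$ and hence $s\le_\tw t$ by the fixed point condition for $\tw$; or $f(s)\le_{X'}y'$ for some $y'\in\supp_{X'}(W(f)(\tau))=[f]^{<\omega}(\supp_\tw(\tau))$, so $y'=f(r)$ with $r\in\supp_\tw(\tau)$ a shorter term, the inductive hypothesis gives $s\le_\tw r$, and the first disjunct gives $s\le_\tw t$. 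Uniqueness is immediate, since the equation $f\circ\kappa=\kappa'\circ W(f)$ determines $f$ by the same recursion.

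The main obstacle I anticipate is part~(a): arranging the height lemma, transitivity and antisymmetry into a single coherent well-founded induction, and carrying out the bookkeeping that moves inequalities between the finite orders $W(|a\cup b|)$, $W(|a\cup b\cup c|)$ and $W(|a\cup c|)$ by combining embedding-preservation with the support condition. Normality is precisely what makes this possible, by forcing each immediate subterm on the left to be dominated by one on the right; without it the height lemma, and therefore antisymmetry, would break down, which is why the hypothesis that~$W$ is normal cannot be dropped.
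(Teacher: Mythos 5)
The paper does not prove this theorem itself: it is imported verbatim as Proposition~3.6 and Theorem~3.8 of the cited work of Freund, Rathjen and Weiermann, so there is no in-paper argument to compare against. Your reconstruction is essentially the standard proof from that reference, and I find it sound: the height lemma ($s\le_\tw t$ implies $h(s)\le h(t)$, which is precisely the Lemma~3.5 that the present paper later refers to), the use of Definition~\ref{def:normal} to dominate each immediate subterm on the left by one on the right in the mixed transitivity case and in the height lemma, the transport of two main inequalities into $W(|a\cup b\cup c|)$ followed by descent to $W(|a\cup c|)$ via the support condition, and the recursive construction of the unique quasi embedding for initiality (with the simultaneous verification that $f$ reflects the order, which is needed before $W(f)$ is even defined) are exactly the right ingredients. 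One organizational point deserves more care than your closing remark suggests. In the transitivity case with two main inequalities, you must already know that $a\cup b\cup c$ and $a\cup c$ are partially ordered by $\le_\tw$ before $W(|a\cup b\cup c|)$ exists and before the main clause for $s\le_\tw u$ can fire, and an induction on $l(s)+l(t)+l(u)$ alone does not deliver this: a triple of proper subterms of $s$ and $u$ need not have smaller total length than $l(s)+l(t)+l(u)$, so the transitivity and antisymmetry instances needed to certify that these finite sets are partially ordered are not covered by that inductive hypothesis. The clean fix is an outer induction on $n$ showing that $\le_\tw$ restricted to terms of length at most $n$ is a partial order; since every subterm of a term of length at most $n$ has length strictly below $n/2$, the outer hypothesis makes all the required finite suborders available, and your inner inductions on sums of lengths then go through exactly as described.
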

In the sequel, we shall prove that the principle of induction along the natural numbers for $\Pi^1_1$-formulas is equivalent to the statement that $\le_\tw$ is a well partial order whenever $W$ is a strongly normal $\wpo$-dilator. We begin by showing the forward implication. Define by recursion the height of a term $\circ(a,\sigma)\in \tw$ as 
\begin{equation*}
    h(\circ(a,\sigma))=\max(\{0\}\cup\{h(s)+1\,|\,s\in a\}).
\end{equation*}
The following is a kind of converse to Lemma~3.5 of~\cite{frw-kruskal}.
\begin{lemma}[$\mathsf{RCA}_0$]\label{height}
Consider a strongly normal $\po$-dilator W. Then
\begin{equation*}
    h(s)<h(t)\quad\Rightarrow\quad s<_\tw t
\end{equation*}
holds for all $s,t\in \tw$.
\end{lemma}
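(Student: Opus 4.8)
The plan is to argue by induction on the height $h(t)$, proving that every $s\in\tw$ with $h(s)<h(t)$ satisfies $s<_\tw t$. Since the relation $\le_\tw$ and the function $h$ are given as sets, the statement ``$h(s)<h(t)=n$ implies $s<_\tw t$ for all $s,t\in\tw$'' is $\Pi^0_1$ in the parameter~$n$, so this induction is available in $\mathsf{RCA}_0$. The base case $h(t)=0$ is vacuous. For the inductive step I would write $s=\circ(a,\sigma)$ and $t=\circ(b,\tau)$; since $h(s)<h(t)$ forces $h(t)>0$ and hence $b\neq\emptyset$, I may fix some $r_0\in b$ of maximal height, so that $h(r_0)=h(t)-1\geq h(s)$.

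First I would observe that every $x\in a$ satisfies $h(x)\leq h(s)-1\leq h(r_0)-1<h(r_0)$, while $h(r_0)<h(t)$; applying the induction hypothesis to each pair $(x,r_0)$ then yields $x<_\tw r_0$ for all $x\in a$. The heart of the argument is to verify the second disjunct in the definition of $\le_\tw$, i.e.\ the inequality $W(|\iota_a^{a\cup b}|)(\sigma)\leq_{W(|a\cup b|)}W(|\iota_b^{a\cup b}|)(\tau)$, the side condition that $a\cup b$ is partially ordered by $\le_\tw$ being immediate from Theorem~\ref{thm:fixpoint}(a). Writing $c=a\cup b$, $\sigma'=W(|\iota_a^c|)(\sigma)$ and $\tau'=W(|\iota_b^c|)(\tau)$, naturality of the support together with $(|a|,\sigma),(|b|,\tau)\in\tr(W)$ shows that $\supp_{|c|}(\sigma')=\en_c^{-1}(a)$ and $\supp_{|c|}(\tau')=\en_c^{-1}(b)$. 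I set $r_0^*:=\en_c^{-1}(r_0)\in\supp_{|c|}(\tau')$. Since every $x\in a$ lies strictly below $r_0$ in $c$, its image lies strictly below $r_0^*$ in $|c|$, so strong normality (Definition~\ref{def:strongly-normal}), applied in $|c|$ with $y=r_0^*$, gives $\sigma'<_{W(|c|)}\mu_{|c|}(r_0^*)$. Because $r_0^*\in\supp_{|c|}(\tau')$, the equivalence of Definition~\ref{def:Aczel-normal} gives $\mu_{|c|}(r_0^*)\leq_{W(|c|)}\tau'$. Chaining these yields $\sigma'\leq_{W(|c|)}\tau'$, which is exactly the required disjunct, so $s\leq_\tw t$; and as $h(s)<h(t)$ forces $s\neq t$, antisymmetry of $\le_\tw$ upgrades this to $s<_\tw t$.

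The hard part will be the boundary case $h(s)=h(r_0)=h(t)-1$, and pinpointing why strong normality is indispensable. When $h(s)<h(r_0)$ one could instead get $s<_\tw r_0$ from the induction hypothesis and conclude through the \emph{first} disjunct, so ordinary normality would suffice; but when the two heights coincide $s$ cannot be nested as a subterm of $r_0$, and it is precisely strong normality --- comparing $\sigma'$ with $\mu_{|c|}(r_0^*)$ rather than with $\tau'$ directly --- that bridges the gap. The remaining difficulty is purely bookkeeping: one must keep track of the identification of each term with a pair in $\tr(W)$ and transport the strict inequalities $x<_\tw r_0$ across the isomorphism $\en_c\colon|c|\to c$ so that strong normality can be invoked inside $|c|$. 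I expect Lemma~\ref{lem:Aczel-normal-basic} and the naturality of $\supp$ to handle this last part routinely.
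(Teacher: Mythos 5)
Your proof is correct and follows essentially the same route as the paper's: pick a child $r_0\in b$ of maximal height, use the induction hypothesis to get $x<_\tw r_0$ for all $x\in a$, then chain strong normality ($\sigma'<\mu(r_0^*)$) with Aczel-normality ($\mu(r_0^*)\leq\tau'$) and conclude by antisymmetry. The only (cosmetic) difference is that you verify the second disjunct of $\le_\tw$ directly at the level of the finite representatives in $W(|a\cup b|)$, whereas the paper invokes Theorem~\ref{thm:fixpoint}(b) to carry out the identical computation in $W(\tw)$ via the fixed-point map $\kappa$.
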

\begin{proof}
We argue by induction on the build-up of~$t$. In view of Theorem~\ref{thm:fixpoint}(b) we may write $s=\kappa(\sigma)=\circ(a,\sigma_0)$ and $t=\kappa(\tau)=\circ(b,\tau_0)$. Given $h(s)<h(t)$, there must be a $t'\in b=\supp_{\tw}(\tau)$ with $h(t)=h(t')+1$ and hence $h(s')<h(s)\leq h(t')$ for all~$s'\in a=\supp_{\tw}(\sigma)$. The latter entails $s'<_\tw t'$ by induction, so that strong normality yields $\sigma<_{W(\tw)}\mu_{\tw}(t')\leq_{W(\tw)}\tau$. From Definition \ref{def:fixpoint} we know that $\kappa$ preserves the order. We thus get $s=\kappa(\sigma)\le_\tw\kappa(\tau)=t$, which is a strict inequality because $h(s)<h(t)$ entails $s\neq t$.\end{proof}

We now derive the first part of the promised equivalence.

\begin{proposition}[$\mathsf{RCA}_0$]\label{prop:ind-to-wpo}
Assume induction over~$\mathbb N$ for all $\Pi^1_1$-formulas. If $W$ is a strongly normal $\wpo$-dilator, then $\tw$ is a well partial order.
\end{proposition}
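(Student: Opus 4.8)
The plan is to prove that $\tw$ is a well partial order by assuming, for contradiction, that there is a bad sequence $(t_i)_{i<\omega}$ in $\tw$ — that is, an infinite sequence with $t_i \not\leq_\tw t_j$ for all $i<j$ — and to derive a contradiction using the $\Pi^1_1$-induction hypothesis. The key structural tool will be \emph{minimal bad sequences} in the style of Nash-Williams, but the novelty here, and the reason strong normality enters, is that we must organise the argument by the \emph{height} function $h$ so as to reduce well partial orderedness of $\tw$ to well partial orderedness of lower layers, which can then be controlled by an induction whose induction hypothesis is genuinely $\Pi^1_1$.

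First I would make precise the stratification of $\tw$ by height: for each $n<\omega$, let $\tw_n = \{s\in\tw \mid h(s) < n\}$. By Lemma~\ref{height}, distinct height layers are totally comparable in a strong sense — any element of strictly smaller height lies below any element of strictly larger height — so the order-theoretic complexity of $\tw$ is concentrated within each fixed height level. The crucial observation, which I would formalise as a subclaim, is that a bad sequence in $\tw$ can, after passing to a subsequence, be assumed to consist of elements of a fixed height $n$: indeed, by Lemma~\ref{height}, if $h(t_i) < h(t_j)$ then $t_i <_\tw t_j$, contradicting badness; so in a bad sequence the heights are non-increasing, hence eventually constant, and we may discard the initial segment. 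Thus the statement reduces to showing that each $\tw_{n+1}$ (or the set of terms of height exactly $n$) is a well partial order, and this is the layer I would induct on.

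The heart of the argument is the induction step, carried out with the $\Pi^1_1$-induction principle on $n$. The statement "$\tw_n$ is a well partial order" is $\Pi^1_1$ (it asserts that every infinite sequence, an arithmetical object quantified over, realises an inequality), so induction along $\mathbb{N}$ for $\Pi^1_1$-formulas applies. The base case $\tw_0 = \emptyset$ is trivial. For the induction step, assuming $\tw_n$ is a well partial order, I would show $\tw_{n+1}$ is one. A term $t = \circ(b,\tau_0)$ of height $\leq n$ has all its support elements $b = \supp_\tw(\tau)$ of height $< n$, hence $b \subseteq \tw_n$; identifying $t$ with the pair $(\supp_\tw(\tau), \tau_0) \in \tr(W)$, an element of $\tw_{n+1}$ is essentially an element of $W$ applied to a finite suborder of $\tw_n$. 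The comparison clause of $\le_\tw$ restricted to such terms, modulo the "$s \le_\tw r$ for some $r\in b$" disjunct which is controlled by $\tw_n$, is governed by the $\wpo$-dilator condition on $W$: since $\tw_n$ is a well partial order and $W$ is a $\wpo$-dilator, $W(\tw_n)$ is a well partial order, and one transfers this to $\tw_{n+1}$ via $\kappa$ together with the recursive comparison rule.

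The main obstacle I anticipate is the faithful bookkeeping of the recursive order $\le_\tw$ against the order $\le_{W(\tw)}$ under the restriction to a height layer, and in particular handling the disjunct $s \le_\tw r$ for some $r \in b$ in the definition of $\le_\tw$. This is where one must argue that a bad sequence at height exactly $n$ cannot have any $t_i \le_\tw r$ for $r$ in the support of some later $t_j$ (since such $r$ has strictly smaller height and this would interact with badness and Lemma~\ref{height}), so that on the relevant subsequence the order reduces to the $W(\tw_n)$-comparison; a Higman/Nash-Williams minimal-bad-sequence argument applied inside $W(\tw_n)$ then closes the contradiction. I would need to verify that this minimal-bad-sequence extraction, or an equivalent use of the $\wpo$-dilator property, is available over $\mathsf{RCA}_0$ with the stated $\Pi^1_1$-induction, which is precisely the point where the strength of the hypothesis is consumed; the fact that $W$ is only assumed \emph{strongly normal} (rather than, say, Aczel-normal with further structure) is exactly what makes Lemma~\ref{height} available and thereby makes the height stratification collapse the problem to a layer-by-layer induction.
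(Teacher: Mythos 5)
Your overall strategy is the paper's: use Lemma~\ref{height} to see that heights are non-increasing along any bad sequence, so that every bad sequence lives in some layer $\{s\in\tw\mid h(s)<n\}$, and then prove by $\Pi^1_1$-induction on $n$ that each such layer contains no bad sequence, feeding the induction hypothesis into the $\wpo$-dilator property of~$W$. Up to that point you match the paper exactly (the paper works with $X=\bigcup_i a(i)$ rather than with all of the layer, but that is immaterial).

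However, the way you propose to close the induction step contains a step that would fail, together with an unnecessary complication. The step that fails: a Nash--Williams minimal-bad-sequence argument is not available in the base theory --- it is known to require $\Pi^1_1$-comprehension, which is exactly the strength the proposition is designed to avoid (the point of strong normality is to bring the uniform Kruskal theorem down to $\Pi^1_1$-induction). Fortunately no such argument is needed. Writing $s_i=\kappa(\sigma_i)=\circ(a(i),\sigma_i')$, the induction hypothesis makes $X:=\bigcup_i a(i)\subseteq\{s\in\tw\mid h(s)<n\}$ a well partial order, and then $W(X)$ is a well partial order simply by the definition of a $\wpo$-dilator, where ``well partial order'' means the no-bad-sequence formulation that the paper fixes for use in $\mathsf{RCA}_0$. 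Hence the sequence of the elements $W(\iota_{a(i)}^X\circ\en_{a(i)})(\sigma_i')$ of $W(X)$ directly admits $i<j$ with
\begin{equation*}
W(\iota_{a(i)}^X\circ\en_{a(i)})(\sigma_i')\le_{W(X)}W(\iota_{a(j)}^X\circ\en_{a(j)})(\sigma_j'),
\end{equation*}
and composing with the embedding $W(\iota_X^\tw)$ yields $\sigma_i\le_{W(\tw)}\sigma_j$, whence $s_i\le_\tw s_j$ because $\kappa$ preserves the order. The unnecessary complication: you treat the disjunct ``$s\le_\tw r$ for some $r\in b$'' in the definition of $\le_\tw$ as an obstacle that must be ruled out along the sequence. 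It is not: in the direction you need (producing \emph{some} $i<j$ with $s_i\le_\tw s_j$) that disjunct can only supply additional inequalities, and the Kruskal fixed point equation already gives $\sigma\le_{W(\tw)}\tau\Rightarrow\kappa(\sigma)\le_\tw\kappa(\tau)$ outright. So the correct completion of your argument is strictly simpler than what you sketch, and it is what the paper does.
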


\begin{proof}
Let us recall that a sequence $s_0,s_1,\ldots$ in $\tw$ is bad if there are no $i<j$ with $s_i\leq_\tw s_j$. In this situation, Lemma \ref{height} implies
\begin{equation*}
    0\le i<j\quad\To\quad h(s_j)\le h(s_i).
\end{equation*}
Knowing this, we only need to argue that $\{s\in\tw\,|\,h(s)<n\}$ contains no bad sequence, for all $n\in\mathbb N$. We do so by induction on $n$. For the induction step, assume towards a contradiction that $s_0,s_1,\ldots\subseteq\tw$ is a bad sequence with~$h(s_0)=n$. By Theorem~\ref{thm:fixpoint} we may write $s_i=\kappa(\sigma_i)=\circ(a(i),\sigma_i')$ with $\sigma_i=W(\iota_{a(i)}^\tw\circ\,\en_{a(i)})(\sigma_i')$. Due to the induction hypothesis, the collection
\begin{equation*}
X:=\bigcup_{i\in\mathbb N}a(i)\subseteq \{s\in\tw\,|\,h(s)<n\}
\end{equation*}
is a well partial order. The same holds for $W(X)$, since~$W$ is a $\wpo$-dilator. We thus find indices $i<j$ with
\begin{equation*}
    W(\iota_{a(i)}^X\circ\en_{a(i)})(\sigma_i')\leq_{W(X)}W(\iota_{a(j)}^X\circ\en_{a(j)})(\sigma_j').
\end{equation*}
Now compose with the embedding $W(\iota_X^\tw):W(X)\to W(\tw)$, to get
\begin{equation*}
    \sigma_i=W(\iota_X^\tw\circ\iota_{a(i)}^X\circ\en_{a(i)})(\sigma_i')\leq_{W(\tw)}W(\iota_X^\tw\circ\iota_{a(j)}^X\circ\en_{a(j)})(\sigma_j')=\sigma_j.
\end{equation*}
As $\kappa$ preserves the order, we can conclude $s_i=\kappa(\sigma_i)\le_{\tw}\kappa(\sigma_j)=s_j$, which contradicts the assumption that our sequence was bad.
\end{proof}

Our next objective is to establish the opposite implication. This will rely on a previous result on the linear case. We write $\lo$ for the category of linear orders and embeddings. This is a full subcategory of~$\po$, as a quasi embedding is an embedding when the range is linear. An $\lo$-dilator consists of a functor~$D:\lo\Rightarrow\lo$ and a natural transformation $\supp:D\To[\cdot]^{<\omega}$ that validate the support condition from Definition~\ref{def:podilator}. If $D(X)$ is well founded for every well order~$X$, then~$D$ is called a $\wo$-dilator. Let us note that the $\wo$-dilators coincide with the original dilators of Girard~\cite{girard-pi2} (as explained in Remark~2.2.2 of~\cite{freund-thesis}). Given a functor $W:\po\to\po$, we write $W\restriction\lo:\lo\to\po$ for its restriction to the category of linear orders. In the following definition, we also view~$D:\lo\to\lo$ as a functor from~$\lo$ to~$\po$, by implicitly post-composing with $\lo\hookrightarrow\po$.

\begin{definition}\label{def:quasi-emb-dil}
Consider a $\lo$-dilator $D$ and a $\po$-dilator $W$. A quasi embedding from $D$ into $W$ is a natural transformation $\nu:D\To W\upharpoonright\lo$.
\end{definition}

Let us recall that the components of natural transformations are morphisms, so that $\nu_X:D(X)\to W(X)$ is a quasi embedding for each linear order~$X$. In~\cite{frw-kruskal} it is shown how $\nu$ is determined by its action on the subcategory $\lo_0\subseteq\lo$ of finite linear orders, which allows for a representation in reverse mathematics.

We say that an $\lo$-dilator~$D$ is Aczel-normal if it comes with a natural family of embeddings~$\mu_X:X\to D(X)$ that validate the equivalence from Definition~\ref{def:Aczel-normal}. In the linear case, this is equivalent to the condition that we have
\begin{equation*}
    \sigma<_{D(X)}\mu_X(y)\quad\Leftrightarrow\quad x<_X y\text{ for all }x\in\supp_X(\sigma)
\end{equation*}
for all $\sigma\in D(X)$ and $y\in X$. This reveals that our Aczel-normal $\wo$-dilators coincide with the normal dilators from~\cite{freund-rathjen_derivatives}. As noted in the previous section, the notion goes back to work of Aczel~\cite{aczel-phd,aczel-normal-functors} and relates to Girard's flowers~\cite{girard-pi2}.

A central argument in~\cite{frw-kruskal} concerns a quasi embedding $\nu:D\To W\restriction\lo$ of an arbitrary $\lo$-dilator~$D$ into a $\po$-dilator~$W$ that is normal and in fact \mbox{Aczel-normal}. For the case where $D$ itself is Aczel-normal, we shall now show that $\nu$ factors over a quasi embedding $\nu^*:D\To W^*$ such that~$W^*$ is strongly normal.

\begin{definition}[$\mathsf{RCA}_0$]\label{def:W*}
Consider an Aczel-normal $\po$-dilator~$W$ and the transformations $\supp:W\Rightarrow[\cdot]^{<\omega}$ and $\mu:\operatorname{Id}_\po\To W$ that come with it. We define the following structure (see the next lemma for verifications):
\begin{itemize}
\item For each partial order~$X$, we define $W^*(X)$ as the partial order with the same underlying set as~$W(X)$ and the order relation given by
\begin{equation*}
    \sigma\leq_{W^*(X)}\tau\quad\Leftrightarrow\quad\begin{cases}
    \text{we have }\sigma\leq_{W(X)}\tau\text{ or there is a }y\in\supp_X(\tau)\\\text{with }x<_X y\text{ for all }x\in\supp_X(\sigma).
    \end{cases}
\end{equation*}
\item We declare that the functions
\begin{equation*}
    W^*(f):W^*(X)\to W^*(Y),\quad\supp^*_X:W^*(X)\to[X]^{<\omega},\quad \mu^*_X:X\to W^*(X)
\end{equation*}
coincide with $W(f)$, $\supp_X$ and $\mu_X$, respectively, for any partial orders~$X,Y$ and any quasi embedding~$f:X\to Y$.
\end{itemize}
\end{definition}

In the following we check the expected properties.

\begin{lemma}[$\mathsf{RCA}_0$] The previous definition yields a $\po$-dilator~$W^*$ that is strongly normal. When~$W$ is a $\wpo$-dilator, the same holds for~$W^*$.
\end{lemma}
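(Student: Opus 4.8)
The plan is to verify, in order, that $W^*$ is a functor, that $\supp^*$ satisfies the support condition (so that $W^*$ is a $\po$-dilator), that $\mu^*$ witnesses Aczel-normality, that $W^*$ is strongly normal, and finally that $W^*$ preserves well partial orders when $W$ does. The first step is to confirm that $\leq_{W^*(X)}$ is genuinely a partial order. Reflexivity is immediate from the first disjunct. For antisymmetry and transitivity the key observation is that the two disjuncts interact cleanly: if $\sigma \leq_{W(X)} \tau$ then by Aczel-normality every $x \in \supp_X(\sigma)$ is majorized by some element of $\supp_X(\tau)$, whereas the second disjunct forces strict domination $x <_X y$ by a single $y \in \supp_X(\tau)$. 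I expect antisymmetry to follow because $\sigma \leq_{W^*(X)} \tau$ and $\tau \leq_{W^*(X)} \sigma$ via the second disjunct would produce an infinite strictly descending chain in the supports (or directly a strict inequality $x <_X x$), which is impossible for finite supports; and the mixed cases reduce to antisymmetry of $\leq_{W(X)}$.

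Next I would check the functoriality and order-reflection of $W^*(f) = W(f)$. Since $W(f)$ is already a quasi embedding for $\leq_{W(X)}$, the only new content is that $W(f)$ reflects the second disjunct. Here I would use naturality of $\supp$, namely $\supp_Y(W(f)(\sigma)) = [f]^{<\omega}(\supp_X(\sigma))$, together with the fact that $f$ is order-reflecting, to transport a witness $y' \in \supp_Y(W(f)(\tau))$ with $x' <_Y y'$ for all $x' \in \supp_Y(W(f)(\sigma))$ back to a witness in $X$. That $W^*(f)$ is an order embedding when $f$ is should follow symmetrically, using that $f$ both preserves and reflects the order on the relevant finite supports. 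The support condition for $W^*$ is inherited verbatim from $W$, since $\supp^* = \supp$, $W^*(f) = W(f)$, and $\rng(W^*(f)) = \rng(W(f))$; this makes $W^*$ a $\po$-dilator. Aczel-normality of $\mu^* = \mu$ then needs the equivalence from Definition~\ref{def:Aczel-normal} reread for $\leq_{W^*(X)}$: the forward direction holds because $\leq_{W(X)} \,\subseteq\, \leq_{W^*(X)}$, and the reverse direction holds because if $\mu_X(x) \leq_{W^*(X)} \sigma$ via the second disjunct, then since $\supp_X(\mu_X(x)) = \{x\}$ by Lemma~\ref{lem:Aczel-normal-basic}(a), the witness $y \in \supp_X(\sigma)$ satisfies $x <_X y$, which certainly gives $x \leq_X y$ with $y \in \supp_X(\sigma)$.

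Strong normality of $W^*$ is essentially built into the definition of $\leq_{W^*(X)}$ and should be the quickest part: given $x <_X y$ for all $x \in \supp_X(\sigma)$, the second disjunct with the singleton support $\supp_X(\mu_X(y)) = \{y\}$ immediately yields $\sigma \leq_{W^*(X)} \mu_X(y)$, and strictness follows since $\sigma = \mu_X(y)$ would force $\supp_X(\sigma) = \{y\}$ and hence $y <_X y$. The genuinely substantial step, and the one I expect to be the main obstacle, is the final claim that $W^*$ is a $\wpo$-dilator whenever $W$ is. The difficulty is that $\leq_{W^*(X)}$ is a \emph{weaker} (larger) relation than $\leq_{W(X)}$, so one cannot simply quote that $W(X)$ is a well partial order; rather, enlarging the order makes it \emph{easier} to be a well partial order, so heuristically the statement should hold, but it requires a real argument. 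My plan is to show that any bad sequence for $\leq_{W^*(X)}$ is already bad for $\leq_{W(X)}$: if $\sigma_0, \sigma_1, \ldots$ is bad in $W^*(X)$, then in particular no $\sigma_i \leq_{W(X)} \sigma_j$ for $i<j$ (since $\leq_{W(X)} \subseteq \leq_{W^*(X)}$), so the same sequence is bad in $W(X)$, contradicting that $W(X)$ is a well partial order. This reduction is clean and shows the well-partial-order property transfers in the correct direction. I would present this last argument explicitly while treating the partial-order, functoriality, and support verifications as routine checks of the kind indicated in the surrounding text.
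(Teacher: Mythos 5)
Your proposal is correct and follows essentially the same route as the paper: establish that $W^*$ inherits (Aczel-)normality, verify the partial-order and quasi-embedding conditions by a case analysis on the two disjuncts using naturality of supports, read off strong normality from the definition, and transfer the well-partial-order property via the inclusion $\leq_{W(X)}\subseteq\leq_{W^*(X)}$ (the paper phrases this last step as the identity $W^*(X)\to W(X)$ being a quasi embedding, which is exactly your bad-sequence argument). One small caution: the mixed antisymmetry case (one inequality via $\leq_{W(X)}$, the other via the second disjunct) does \emph{not} reduce to antisymmetry of $\leq_{W(X)}$ but rather needs the normality/majorization observation you state just before -- and transitivity requires the analogous case analysis, which the paper spells out and which does go through.
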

\begin{proof}
Let us first recall that we have $\supp_X(\mu_X(x))=\{x\}$ due to Lemma~\ref{lem:Aczel-normal-basic}. As a preliminary observation, we can infer that $\mu^*$ validates Definition~\ref{def:Aczel-normal} (including the fact that $\mu^*_X(x)\leq_{W^*(X)}\mu^*_X(y)$ entails $x\leq_X y$), i.\,e.~the conditions for being Aczel-normal. By the proof of Lemma~\ref{lem:Aczel-normal-normal}, it follows that~$W^*$ validates the normality condition from Definition~\ref{def:normal}. Based on this fact, we now show that $W^*(X)$ is a partial order for any given partial order~$X$. Reflexivity is clearly inherited from~$W(X)$. Concerning antisymmetry, we consider the case where $\sigma\leq_{W^*(X)}\tau$ holds because we have a $y\in\supp_X(\tau)$ with $x<_X y$ for all~$x\in\supp_X(\sigma)$. If we also had $\tau\leq_{W^*(X)}\sigma$, the implication from Definition~\ref{def:normal} would yield an $x\in\supp_X(\sigma)$ with $y\leq_X x$, which would lead to a contradiction. To establish transitivity, we assume $\rho \le_{W^*(X)}\sigma$ and $\sigma\le_{W^*(X)}\tau$. If both inequalities do also hold in $W(X)$, then we get $\rho\le_{W(X)}\tau$ and hence $\rho\le_{W^*(X)}\tau$. In order to cover the remaining cases, we first assume that there is a $y\in\supp_X(\sigma)$ with $x<_X y$ for all $x\in\supp_X(\rho)$. The implication from Definition~\ref{def:normal} yields a $z\in\supp_X(\tau)$ with~$y\leq_X z$. We can conclude that $x<_X z$ holds for all $x\in\supp_X(\rho)$, so that we indeed get~$\rho\leq_{W^*(X)}\tau$. A similar argument applies when $\sigma\le_{W^*(X)}\tau$ holds because there is a $z\in\supp_X(\tau)$ with $y<_X z$ for all~$y\in\supp_X(\sigma)$. Next, we consider a quasi embedding $f:X\to Y$ and assume $W^*(f)(\sigma)\le_{W^*(Y)}W^*(f)(\tau)$. In the crucial case, we have a $y\in\supp_Y(W(f)(\tau))$ with $x<_Y y$ for all~$x\in\supp_Y(W(f)(\sigma))$. Due to the naturality of supports, we may write $y=f(y')$ with $y'\in\supp_X(\tau)$. For any $x'\in\supp_X(\sigma)$ we have $f(x')\in\supp_Y(W(f)(\sigma))$, so that we get $f(x')<_Y f(y')$ and hence $x'<_X y'$. This yields $\sigma\leq_{W^*(X)}\tau$, as needed to show that~$W^*(f)$ is a quasi embedding. A similar argument shows that $W^*(f)$ is an embedding when the same holds for~$f$. The other conditions in Definition~\ref{def:podilator} do not concern the order relation and are therefore inherited from~$W$. Hence $W^*$ is indeed a $\po$-dilator. We have already considered the condition that makes it Aczel-normal. The strong normality condition from Definition~\ref{def:strongly-normal} is satisfies by construction (as Lemma~\ref{lem:Aczel-normal-basic} ensures $\supp_X(\mu_X(y))=\{y\}$). Finally, $W^*(X)$ is a well partial order when the same holds for $W(X)$, as the identity $W^*(X)\to W(X)$ is a quasi embedding.
\end{proof}

Let us now prove the aforementioned factorization result.

\begin{lemma}[$\mathsf{RCA}_0$]
Consider a quasi embedding $\nu:D\To W\restriction\lo$, where $D$ is an $\lo$-dilator and $W$ is an Aczel-normal $\po$-dilator. If $D$ is also Aczel-normal, we get a quasi embedding $\nu^*:D\To W^*\restriction\lo$.
\end{lemma}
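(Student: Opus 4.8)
The plan is to let $\nu^*$ have exactly the same components as $\nu$, now viewing $\nu_X$ as a map into $W^*(X)$. This is legitimate because $W^*(X)$ and $W(X)$ share the same underlying set and $W^*(f)$ agrees with $W(f)$, so the naturality squares for $\nu^*$ are literally those for $\nu$ and naturality is inherited for free. The entire content of the lemma is therefore to check that each $\nu^*_X\colon D(X)\to W^*(X)$ is order reflecting, i.e.\ that $\nu_X(s)\le_{W^*(X)}\nu_X(t)$ entails $s\le_{D(X)}t$ for every linear order $X$ and all $s,t\in D(X)$. Since $\le_{W^*(X)}$ is coarser than $\le_{W(X)}$, this is a genuinely stronger demand than $\nu$ being a quasi embedding into $W$.

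The key preliminary step is to show that $\nu$ preserves supports: $\supp^W_X(\nu_X(s))=\supp^D_X(s)$ for every $s\in D(X)$, where $\supp^D$ and $\supp^W$ denote the supports of $D$ and $W$. The inclusion $\subseteq$ is routine: writing $s=D(\iota)(s_0)$ for the inclusion $\iota\colon a\hookrightarrow X$ with $a=\supp^D_X(s)$ and $s_0$ of full $D$-support, naturality of $\nu$ gives $\nu_X(s)=W(\iota)(\nu_a(s_0))$, and naturality of the $W$-support then bounds $\supp^W_X(\nu_X(s))$ inside $a$; this also reduces the reverse inclusion to the case where $X=a$ is finite with full $D$-support. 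For the reverse inclusion I would argue by contradiction via a duplication trick: if some $x\in\supp^D_X(s)$ were missing from $\supp^W_X(\nu_X(s))$, form the finite linear order $b$ by inserting a new point $x'$ immediately after $x$, and let $g_1,g_2\colon X\to b$ be the two embeddings that fix $X\setminus\{x\}$ and send $x$ to $x$, resp.\ to $x'$. Because $\supp^W_X(\nu_X(s))$ avoids $x$, the element $\nu_X(s)$ factors through $W$ of the inclusion of $X\setminus\{x\}$, on which $g_1$ and $g_2$ agree; hence $W(g_1)(\nu_X(s))=W(g_2)(\nu_X(s))$. By naturality of $\nu$ these equal $\nu_b(D(g_1)(s))$ and $\nu_b(D(g_2)(s))$, so injectivity of $\nu_b$ (a quasi embedding, hence injective) forces $D(g_1)(s)=D(g_2)(s)$. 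This is absurd, because $D(g_1)(s)$ and $D(g_2)(s)$ have supports $g_1(\supp^D_X(s))\ni x$ and $g_2(\supp^D_X(s))\not\ni x$. I expect this support-preservation step, the only place where an auxiliary order must be built by hand, to be the main obstacle; note that it uses only the dilator axioms and injectivity, not Aczel-normality.

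With support preservation available, order reflection is a short case distinction on the two clauses defining $\le_{W^*(X)}$. If $\nu_X(s)\le_{W(X)}\nu_X(t)$, then $s\le_{D(X)}t$ since $\nu_X$ reflects the order of $W$. Otherwise there is some $y\in\supp^W_X(\nu_X(t))$ with $x<_X y$ for all $x\in\supp^W_X(\nu_X(s))$; support preservation rewrites this as $y\in\supp^D_X(t)$ together with $x<_X y$ for all $x\in\supp^D_X(s)$. Now Aczel-normality of $D$ yields $\mu^D_X(y)\le_{D(X)}t$ from $y\in\supp^D_X(t)$, while the linear-case characterization of Aczel-normality (applicable since $D(X)$ is a linear order) turns the condition on $\supp^D_X(s)$ into $s<_{D(X)}\mu^D_X(y)$. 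Chaining these gives $s<_{D(X)}\mu^D_X(y)\le_{D(X)}t$, hence $s\le_{D(X)}t$, as required. All arguments are finitary and reduce to finite orders, so the proof goes through in $\mathsf{RCA}_0$.
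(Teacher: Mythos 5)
Your proposal is correct and follows essentially the same route as the paper: take $\nu^*_X=\nu_X$ on the shared underlying set, get naturality for free, and reduce order reflection in the extra clause of $\le_{W^*(X)}$ to the identity $\supp^W_X\circ\nu_X=\supp^D_X$ plus the linear-order reformulation of Aczel-normality for $D$. The only difference is that the paper simply cites Lemma~4.2 of \cite{frw-kruskal} for the support-preservation identity, whereas you reprove it via the point-duplication argument; your proof of that step is sound.
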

\begin{proof}
Given that $W^*(X)$ and $W(X)$ have the same underlying set, we stipulate that $\nu^*_X$ is the same map as $\nu_X$, for each linear order~$X$. Naturality is immediate. The task is to show that $\nu^*_X:D(X)\to W^*(X)$ is a quasi embedding. Let us assume that we have $\nu^*_X(\sigma)\leq_{W^*(X)}\nu^*_X(\tau)$. In the crucial case, we have a $y\in{\supp^W_X}\circ\nu_X(\tau)$ with $x<_X y$ for all $x\in{\supp^W_X}\circ\nu_X(\tau)$. Here $\supp^W_X$ is the support function that comes with~$W$, while $\supp^D_X$ will denote the one that comes with~$D$. Lemma~4.2 of~\cite{frw-kruskal} ensures ${\supp^W_X}\circ\nu_X=\supp^D_X$. This means that we have $y\in\supp^D_X(\tau)$ as well as $x<_X y$ for all $x\in\supp^D_X(\sigma)$. Given that $D$ is Aczel-normal, we can conclude that we have $\sigma<_{D(X)}\mu_X(y)\leq_{D(X)}\tau$ (see the explanations after Definition~\ref{def:quasi-emb-dil}).
\end{proof}

 We can finally prove the remaining implication in our main result.
 
\begin{theorem}
The following are equivalent over $\mathsf{ACA}_0$:
\begin{enumerate}[label=(\roman*)]
\item Induction along~$\mathbb N$ is available for all $\Pi^1_1$-formulas.
\item If $W$ is any strongly normal $\wpo$-dilator, then its initial Kruskal fixed point~$\tw$ is a well partial order.
\end{enumerate}
\end{theorem}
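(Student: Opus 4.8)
Since Proposition~\ref{prop:ind-to-wpo} establishes the implication (i)$\Rightarrow$(ii) already over $\mathsf{RCA}_0$, the remaining task is to prove (ii)$\Rightarrow$(i). The plan is to reduce this to the known result for the linear case. Recall from the introduction that, over $\mathsf{ACA}_0$, statement~(i) is equivalent to the assertion that $\vartheta D$ is well founded for every Aczel-normal $\wo$-dilator~$D$, by~\cite{freund-single-fixed-point} (our Aczel-normal $\wo$-dilators being precisely the normal dilators of~\cite{freund-rathjen_derivatives}). It therefore suffices to assume~(ii) and to derive, for an arbitrary Aczel-normal $\wo$-dilator~$D$, that $\vartheta D$ is well founded.

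To this end I would first transport $D$ into the setting of $\wpo$-dilators. By the central construction of~\cite{frw-kruskal} that we recalled before Definition~\ref{def:W*}, there is an Aczel-normal $\po$-dilator~$W$ together with a quasi embedding $\nu:D\To W\restriction\lo$; since $D$ is a $\wo$-dilator, this $W$ is in fact a $\wpo$-dilator. As $D$ is also Aczel-normal, the factorization lemma above provides a quasi embedding $\nu^*:D\To W^*\restriction\lo$, where $W^*$ is the strongly normal $\wpo$-dilator associated with~$W$ by Definition~\ref{def:W*} and the subsequent lemma.

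The crucial step is to lift $\nu^*$ to the level of fixed points, that is, to produce a quasi embedding $g:\vartheta D\to\mathcal{T}W^*$ of the linear fixed point of~$D$ into the Kruskal fixed point of~$W^*$. Here I would exploit that $\vartheta D$ and $\mathcal{T}W^*$ are constructed in parallel, by recursion along systems of terms that are determined by the finite parts of~$D$ and~$W^*$. One defines $g$ by recursion on terms, sending a collapse in $\vartheta D$ built from $\sigma$ to the value $\kappa(\nu^*(\sigma))$ in~$\mathcal{T}W^*$; the naturality of $\nu^*$ together with the identity $\supp^W_X\circ\nu_X=\supp^D_X$ from Lemma~4.2 of~\cite{frw-kruskal} should guarantee that $g$ reflects the order. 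This is precisely the monotonicity of the fixed-point construction with respect to quasi embeddings of dilators, which is one of the central arguments of~\cite{frw-kruskal}; substituting the strongly normal $W^*$ for the normal $\wpo$-dilator used there, and $\nu^*$ for $\nu$, should leave that argument intact.

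Granting $g$, the conclusion follows quickly. By~(ii) the order $\mathcal{T}W^*$ is a well partial order, since $W^*$ is a strongly normal $\wpo$-dilator. As $\vartheta D$ is a linear order, any infinite descending sequence $x_0>x_1>\cdots$ in $\vartheta D$ would be sent by $g$ to a sequence in~$\mathcal{T}W^*$ for which the well partial order property yields indices $i<j$ with $g(x_i)\leq g(x_j)$; since $g$ reflects the order this forces $x_i\leq x_j$, contradicting $x_i>x_j$. Hence $\vartheta D$ is well founded, and as $D$ was arbitrary the cited linear equivalence delivers~(i). I expect the main obstacle to be the verification in the third step that $g$ is well defined and order reflecting: one must ensure that passing from the order of $W$ to the coarser order of $W^*$ does not disturb the comparisons inherited from~\cite{frw-kruskal}, for which the reformulation of strong normality recorded after Definition~\ref{def:strongly-normal} should be the key.
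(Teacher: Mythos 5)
Your overall strategy coincides with the paper's: reduce (ii)$\Rightarrow$(i) to the linear result of \cite{freund-single-fixed-point}, embed the given Aczel-normal $\wo$-dilator~$D$ into the $\wpo$-dilator~$W_D$ from Section~5 of \cite{frw-kruskal}, pass to the strongly normal $W_D^*$ via Definition~\ref{def:W*} and the factorization lemma, lift to a quasi embedding of $\vartheta D$ into the Kruskal fixed point (the paper simply invokes Theorem~4.5 of \cite{frw-kruskal} here rather than redoing the term recursion you sketch, which is fine either way), and conclude well foundedness. There is, however, a genuine gap at the second step: you assert that the dilator supplied by \cite{frw-kruskal} is Aczel-normal, but that paper only establishes normality in the sense of Definition~\ref{def:normal}. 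Aczel-normality is precisely what is needed both to form $W_D^*$ at all (Definition~\ref{def:W*} takes the transformation $\mu$ as input) and to apply the factorization lemma, so it cannot be waved through. The paper's proof devotes its entire second half to this point: it constructs $\mu^W_X(x):=(u^X_x,\mu^D_1(0))$ explicitly, checks naturality and that $\mu^W_X$ is an embedding, and verifies the equivalence of Definition~\ref{def:Aczel-normal}; the verification genuinely uses that $D$ itself is Aczel-normal, namely to see that the clause $\mu^D_{[u]}(i)\leq_{D([u])}\sigma$ is redundant because $([u],\sigma)\in\tr(D)$ forces $\supp_{[u]}(\sigma)=[u]$. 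So the Aczel-normality of $W_D$ is a consequence of the hypothesis on~$D$, not an off-the-shelf feature of the construction.

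A second, smaller omission concerns your opening reduction. Theorem~3.14 of \cite{freund-single-fixed-point} is formulated in terms of the existence of a well order~$X$ admitting an \emph{embedding} $D(X)\to X$; to instantiate it with $X=\vartheta D$ one must check that the collapse $\vartheta:D(\vartheta D)\to\vartheta D$ is an embedding, which is not true for arbitrary dilators because $\vartheta(\sigma)<\vartheta(\tau)$ requires the side condition $x<\vartheta(\tau)$ for all $x\in\supp(\sigma)$. The paper shows that this side condition is automatic exactly when $D$ is Aczel-normal (any $x\in\supp(\sigma)$ is majorized by some $y\in\supp(\tau)$, which lies below $\vartheta(\tau)$). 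Your appeal to ``the equivalence recalled in the introduction'' hides this verification; it is short but it is part of the argument and should be made explicit.
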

\begin{proof}
In Proposition~\ref{prop:ind-to-wpo} we have seen that~(i) implies~(ii) over $\mathsf{RCA}_0$. For the converse, we rely on Theorem~3.14 from~\cite{freund-single-fixed-point}: it tells us that~(i) follows from the statement that each Aczel-normal $\wo$-dilator~$D$ admits an embedding~\mbox{$D(X)\to X$} for some well order~$X$. Let us note that the base theory $\mathsf{ACA}_0$ is inherited from the cited result. We claim that one can take~$X$ to be the Bachmann-Howard fixed point~$\vartheta D$ that has been constructed in~\cite{freund-computable}. As shown in the latter, we then have a function $\vartheta:D(X)\to X$ with the following properties:
\begin{enumerate}
    \item We get $\vartheta(\sigma)<_X\vartheta(\tau)$ if $\sigma<_{D(X)}\tau$ and $x<_X\vartheta(\tau)$ for all $x\in\supp_X(\sigma)$.
    \item We have $y<_X\vartheta(\tau)$ for all $y\in\supp_X(\tau)$.
\end{enumerate}
In general, the function~$\vartheta$ is no embedding, due to the side condition in~(1). To apply the aforementioned result from~\cite{freund-single-fixed-point}, we now show that $\vartheta$ is an embedding when~$D$ is Aczel-normal. Let us write $\mu^D:\operatorname{ID}_\lo\To D$ for the natural transformation that witnesses this property. We assume $\sigma<_{D(X)}\tau$. To conclude $\vartheta(\sigma)<_X\vartheta(\tau)$ by~(1) and~(2), we note that any $x\in\supp_X(\sigma)$ admits a $y\in\supp_X(\tau)$ with~$x\leq_X y$. Indeed, we would otherwise get $\tau<_{D(X)}\mu^D_X(x)\leq_{D(X)}\sigma$ since $D$ is Aczel-normal, as in the previous proof. It remains to show that~(ii) implies the statement that $\vartheta D$ is well founded for any normal $\wo$-dilator~$D$. For this purpose, we need only produce a quasi embedding of $\vartheta D$ into the initial Kruskal fixed point~$\tw$ of some strongly normal $\wpo$-dilator~$W$. By Theorem~4.5 of~\cite{frw-kruskal}, such a quasi embedding can be obtained from a quasi embedding of~$D$ into~$W$. In Section~5 of the same paper, it is shown how to produce a quasi embedding $\nu:D\To W_D\restriction\lo$ for a $\wpo$-dilator~$W_D$ that is normal but not necessarily strongly normal. Below, we construct a natural transformation~$\mu^W:\operatorname{ID}_\po\To W_D$ that makes $W_D$ Aczel-normal. By the two previous lemmas, this will yield the desired quasi embedding~$\nu^*:D\To W_D^*\restriction\lo$ for a strongly normal $\wpo$-dilator~$W_D^*$. To explain the construction of~$\mu^W$, we recall that~$W_D(X)$ consists of the pairs $(u,\sigma)$ such that $u:[u]=\{0,\ldots,[u]-1\}\to X$ with~$[u]\in\mathbb N$ is a finite quasi embedding and $([u],\sigma)$ lies in the trace of~$D$ (see Definition~5.4 of~\cite{frw-kruskal}). We note that $(1,\mu^D_1(0))\in\tr(D)$ holds essentially due to Lemma \ref{lem:Aczel-normal-basic}~(a). For a partial order $X$ and an element $x\in X$, we define $u^X_x:1\to X$ by $u^X_x(0):=x$. Let us now consider
\begin{equation*}
 \mu^W_X:X\to W_D(X)\quad\text{with}\quad\mu^W_X(x):=(u^X_x,\mu^D_1(0)).
\end{equation*}
The given functions are natural due to $f\circ u^X_x=u^Y_{f(x)}$ for $f:X\to Y$, as the reader can confirm by considering Definition~5.4 of~\cite{frw-kruskal}. In the notation that is used in the same definition, functions $h\in\operatorname{Hig}(u^X_x,u)$ correspond to values $i=h(0)<[u]$ with~$x\leq_X u(i)$. In view of $D(h)(\mu^D_1(0))=\mu^D_{[u]}(h(0))$, the definition thus yields
\begin{equation*}
    \mu^W_X(x)\leq_{W_D(X)}(u,\sigma)\quad\Leftrightarrow\quad\text{there is }i<[u]\text{ with }x\leq_X u(i)\text{ and }\mu^D_{[u]}(i)\leq_{D([u])}\sigma.
\end{equation*}
For $(u,\sigma)=\mu^W_X(y)$, the right side amounts to $x\leq_X u^X_y(0)=y$, which shows that $\mu^W_X$ is an embedding. Since $(u,\sigma)\in W_D(X)$ has support~$\rng(u)=\{u(i)\,|\,i\in[u]\}$, the condition for $W_D$ to be Aczel-normal (see Definition~\ref{def:Aczel-normal})~is
\begin{equation*}
    \mu^W_X(x)\leq_{W_D(X)}(u,\sigma)\quad\Leftrightarrow\quad\text{there is }y\in\rng(u)\text{ with }x\leq_X y.
\end{equation*}
To see that the right sides of the previous equivalences amount to the same, we note that the condition $\mu^D_{[u]}(i)\leq_{D([u])}\sigma$ is redundant. Indeed, it follows from the assumption that $D$ is Aczel-normal, since $(u,\sigma)\in W_D(X)$ requires $([u],\sigma)\in\tr(D)$, which means that $\sigma\in D([u])$ has support~$[u]\ni i$.
\end{proof}

\bibliographystyle{amsplain}
\bibliography{Normal_Max-Otypes}

\end{document}